\newcommand\utimes{\mathbin{\ooalign{$\cup$\cr%
   \hfil\raise0.42ex\hbox{$\scriptscriptstyle\times$}\hfil\cr}}}
\newcommand\bigutimes{\mathop{\ooalign{$\bigcup$\cr%
   \hfil\raise0.36ex\hbox{$\scriptscriptstyle\boldsymbol{\times}$}\hfil\cr}}}
\newcommand{\ab}{\allowbreak}
\theoremstyle{definition}
\newtheorem{thm}{Theorem}[section] 
\newtheorem{lem}[thm]{Lemma}
\newtheorem{prop}[thm]{Proposition}
\theoremstyle{definition}
\newtheorem{defn}[thm]{Definition}
\newtheorem{rem}[thm]{Remark}
\newtheorem{nota}[thm]{Notation}
\newcommand{\cA}{\mathcal{A}}
\newcommand{\cP}{\mathcal{P}}
\let\phi=\varphi
\let\epsilon=\varepsilon
\numberwithin{equation}{section} 
\author[Tseng]{Pei-Lun Tseng}
\address[P. -L. Tseng]{Department of Mathematics\\ New York University Abu Dhabi\\
Saadiyat Marina District, Abu Dhabi, United Arab Emirates
}
\email{pt2270@nyu.edu}
\title[A Unified approach to Infinitesimal Freeness]{A Unified approach to Infinitesimal Freeness with Amalgamation}
\begin{document}

\begin{abstract}

We consider the infinitesimal freeness in the operator-valued framework, and we show that the operator-valued infinitesimal (OVI) free independence is equivalent to the operator-valued free independence over an algebra of $2\times 2$ upper triangular matrices. We introduce the notion of OVI cumulants and investigate its properties, and we then deduce that the OVI freeness is equivalent to the vanishing of our mixed cumulants. Moreover, we derive the formula for obtaining the free additive and multiplicative convolutions within the realm of OVI freeness.

\end{abstract}

\maketitle

\tableofcontents

\date{}

\section{Introduction}
The idea of free independence was introduced by D. Voiculescu roughly 40 years ago. Since then, numerous extensions and generalizations have been discovered. One generalization of freeness, freeness of type B was found by Biane, Goodman, and Nica \cite{BGN} in 2003. A key observation in their paper was that freeness of type $B$ can be expressed to a certain extent in terms of Voiculescu's freeness with amalgamation \cite{ast} over $\widetilde{\mathbb{C}}$, a non-selfadjoint algebra of $2\times 2$ upper triangular Toeplitz  matrices. One of the motivations of this paper is to answer a question in Remark 4.9 in \cite{fevrier2010infinitesimal} as whether one can characterize infinitesimal freeness in terms of freeness over a two dimensional algebra. We show that tensoring with $\widetilde{\mathbb{C}}$ reduces infinitesimal freeness to ordinary freeness. One main result (Proposition \ref{thm1}) means all the results of \cite{nica2006lectures} can be easily carried over to the case of infinitesimal freeness of all orders; this will explored in a later paper.

The use in \cite{BGN} of upper triangular matrices inspired the development of infinitesimal freeness in \cite{belinschi2012free} and \cite{fevrier2010infinitesimal}. Thanks to the insight of Shlyakhtenko that
infinitesimal freeness may be used to detect the presence of spikes in many deformed random matrix models, it has become of interest to identify precisely which models have an asymptotic infinitesimal distribution, and which are
asymptotically infinitesimally free. In his pioneering work
\cite{DS}, Shlyakhtenko demonstrated that independent Gaussian and finite rank deterministic matrices, as well as independent Haar unitary and finite rank deterministic matrices, are asymptotically infinitesimally free (see \cite[Theorems 3.4 and 3.5]{DS}). Furthermore, aside from Shlyakhtenko's contributions, other researchers have established that additional random matrix models are infinitesimally free (see \cite{M}, \cite{DF}, \cite{AU21}, and \cite{PKPL22}). 


Another extension of free probability, initiated by Voiculescu \cite{ast}, is freeness with amalgamation (or operator-valued free probability), which parallels the classical concept of conditional independence. An application to random matrix theory was presented by Shlyakhtenko in \cite{SD}, where he showed that even though independent complex Gaussian band matrices are not asymptotically free, they are asymptotically free with amalgamation over diagonal matrices. In addition, this theory was applied by Belinschi, Mai, and Speicher \cite{BMS}
to describe the limit behavior of polynomial functions of various random matrix models via a linearization trick. 

Curran and Speicher \cite{CS} first introduced the concept of OVI infinitesimal freeness, demonstrating that Haar quantum unitary random matrices are asymptotically infinitesimal free over a unital $C^*$-algebra. Our aim in this paper is to delve into the infinitesimal free probability theory within the operator-valued framework. The key observation is 
motivated by \cite{BGN} that we find the connection between (operator-valued) infinitesimal freeness and operator-valued freeness. To be precise, we define the upper triangular probability space $(\widetilde{\mathcal{A}},\widetilde{\mathcal{B}},\widetilde{E})$ 
from a given OVI probability space $(\mathcal{A},\mathcal{B},E,E')$ and show that infinitesimal freeness for unital subalgebras $(\mathcal{A}_i)_{i\in I}$ in $(\mathcal{A},\mathcal{B},E,E')$ is equivalent to freeness for  
$(\widetilde{\mathcal{A}}_i)_{i\in I}$ in $(\widetilde{\mathcal{A}},\widetilde{\mathcal{B}},\widetilde{E})$ (Proposition \ref{thm1}). Based on this result, we have a way to switch problems from (operator-valued) infinitesimal freeness to freeness with amalgamation. We applied this technique to deduce several results in the realm of OVI freeness. 

First, we establish the equivalence between the OVI freeness of subalgebras and the vanishing of mixed operator-valued free cumulants and mixed OVI free cumulants (Theorem \ref{thm3}). Applying Proposition \ref{thm1}, we demonstrate the connection between OVI and matrix-valued OVI freeness (Proposition \ref{thm:matrix_equivalent}).
Additionally, we also show how to find the infinitesimal free additive and multiplicative convolutions of infinitesimal free self-adjoint random variables in the context of OVI probability spaces (Theorem \ref{thm4} and Theorem \ref{OVIMfcon}).
 
This paper is organized as follows. In Section 2, we provide a review of some fundamental concepts related to infinitesimal freeness and operator-valued freeness. In Section 3, we start to discuss the OVI probability. We consider notions of OVI free independence, cumulants, and study their basic properties. In Section 4, we focus on the construction of infinitesimal convolutions in the operator-valued framework. By applying Proposition \ref{thm1},  we demonstrate how to construct the OVI free additive and multiplicative convolutions.

\section*{Acknowledgement}

The author would like to thank Serban T. Belinschi, James A. Mingo, and the referee for their valuable discussion and comments. 

\section{Preliminaries}
\subsection{Infinitesimal Freeness}
We say $(\mathcal{A},\varphi,\varphi')$ is an \emph{infinitesimal probability space} if $\mathcal{A}$ is a unital algebra and $\varphi $ and $\varphi'$ are linear functionals from $\mathcal{A}$ to $\mathbb{C}$ such that $\varphi(1)=1$ and $\varphi'(1)=0$. 
Given an infinitesimal non-commutative probability space $(\mathcal{A},\varphi,\varphi')$ and a random variable $a\in \mathcal{A}$, a pair of linear functionals $(\mu,\mu')$ which both map $\mathbb{C}\langle X \rangle $ into $\mathbb{C}$ are called the \emph{infinitesimal distribution of $a$} if they satisfy $\varphi(P(a))=\mu(P)\ \mbox{and}\ \varphi'(P(a))=\mu'(P).$

In \cite{DS}, Shlyakhtenko showed how to create an infinitesimal probability space from an ensemble of random matrices. More precisely, suppose for each $N\in\mathbb{N}, A_1(N),A_2(N),\dots 
 ,A_k(N)$ are random matrices of size $N\times N$, and consider the map $\tau_N:\mathbb{C}\langle X_1,\cdots, X_k \rangle \rightarrow \mathbb{C}$ given by 
\begin{equation}
\tau_N(P)=E\Big[\frac{1}{N} Tr(P(A_1(N),A_2(N),\dots, A_k(N)))\Big], 
\end{equation}  where $\mathbb{C}\langle X_1,\dots, X_k \rangle$ is the algebra of polynomials in the $k$ non- \ab commuting variables $X_1,\dots,X_k$. Suppose $ \tau = \lim\limits_{N\rightarrow \infty} \tau_N $ exists; that is, $\lim_N\tau_N(P)=\tau(P)$ for all $P\in \mathbb{C}\langle X_1,\dots, X_k \rangle.$ 
If $\tau' = \lim\limits_{N\rightarrow \infty} N(\tau_N-\tau)$ exists, then we say that $\{A_1(N),A_2(N),\dots, A_k(N)\}_N$ has the limit infinitesimal distribution $(\tau,\tau').$
\begin{defn}
Let $(\mathcal{A},\varphi,\varphi')$ be an infinitesimal non-commutative probability space. We say the unital subalgebras $(\mathcal{A}_i)_{i\in I}$ of $\mathcal{A}$ are \emph{infinitesimally free} with respect to $(\varphi,\varphi')$ if for all $n\in \mathbb{N}$, $a_1,a_2,\cdots, a_n\in \mathcal{A}$ such that $a_k\in\mathcal{A}_{i_k}$ where $i_1, i_2, \cdots, i_n\in I$, $i_1\neq \cdots \neq i_n$, and $\varphi(a_1)=\varphi(a_2)=\cdots =\varphi(a_n)=0$, then we have 
\begin{eqnarray} \label{eqn1}
\varphi(a_1a_2\cdots a_n)&=& 0 ; \nonumber \\
\varphi'(a_1a_2\cdots a_n)&=& \sum\limits_{k=1}^n \varphi(a_1a_2\cdots a_{k-1}\varphi'(a_k)a_{k+1}\cdots a_n).
\end{eqnarray}  
\end{defn}
We note that the condition (\ref{eqn1}) on $\varphi'$ is equivalent to
\begin{eqnarray*}
\lefteqn{\varphi'(a_1\cdots a_n)} \\
&=& \left\{
        \begin{array}{lr}
        \varphi(a_1a_n)\varphi(a_2a_{n-1})\cdots \varphi(a_{(n-1)/2}a_{(n+3)/2})\varphi'(a_{(n+1)/2})  \\
        \ \mbox{if}\ n\ \mbox{is odd and}\ i_1=i_n,i_2=i_{n-1}\dots, i_{(n-1)/2}=i_{(n+1)/2} \\ 
        0 \ \mbox{otherwise} 
        \end{array}.
\right.
\end{eqnarray*}  
A set $\{a_i\mid i\in I\}$ is free if the unital subalgebras $alg(1,a_i)$ generated by $a_i(i\in I)$ form a infinitesimally free family. 

The notion of free and infinitesimal free cumulants, described in \cite{M} and \cite{fevrier2010infinitesimal}, plays a key role in characterizing infinitesimal freeness. Let us review this concept as follows.
\begin{nota} 
For $n\geq 1$, a partition of $[n]$ is a set $\pi=\{V_1,\dots,V_r\}$ of pairwise disjoint non-empty subsets of $[n]:=\{1,2,\dots,n\}$ such that 
$V_1\cup \cdots\cup V_r=[n]$. The set of all partitions of $[n]$ is denoted by $\cP(n)$, and $NC(n)$ denotes all non-crossing partitions of $[n]$ in the sense that we cannot find distinct blocks $V_r$ and $V_s$ with $a,c\in V_r$ and $b,d\in V_s$ such that $a<b<c<d$. 

Let $\cA$ be a unital algebra and $a \in \cA$. Suppose $a_1,\dots,a_n$ are elements in $\mathcal{A}$ and $V=\{i_1<\cdots<i_s\}$ is a block of some partition of $[n]$, we set $(a_1,\dots,a_n)|_V=(a_{i_1},\dots,a_{i_s}).$ Let $\{f_n:\mathcal{A}^n\to\mathbb{C}\}_{n\geq 1}$ and $\{f_n':\mathcal{A}^n\to\mathbb{C}\}_{n\geq 1}$ be sequences of multilinear functionals. For each partition $\pi$, we set
$$
f_{\pi}(a_1,\dots,a_n)=\prod_{V\in \pi}f_{|V|}((a_1,\dots,a_n)|_V).
$$
Moreover, if $V$ is a block in $\pi$, then $\partial f_{\pi,V}$ is defined by the map that is equal to $f_{\pi}$ except for the block $V$, where we replace $f_{|V|}$ by $f'_{|V|}$. Then we define $\partial f_{\pi}$ by
$$
\partial f_{\pi}(a_1,\dots,a_n)= \sum_{V\in \pi}\partial f_{\pi,V}(a_1,\dots,a_n).
$$
\end{nota}

\begin{defn}
Suppose $(\cA,\varphi,\varphi')$ is an infinitesimal probability space, the \emph{free cumulants} $\{\kappa_n:\mathcal{A}^n\to\mathbb{C}\}_n$ and \textit{infinitesimal free cumulants} $\{\kappa_n':\cA^n\to\mathbb{C}\}_n$ are defined inductively  via  
$$
\varphi(a_1\cdots a_n)=\sum_{\pi\in NC(n)}\kappa_{\pi}(a_1,\dots,a_n) \text{ and }
\varphi'(a_1,\dots,a_n)=\sum_{\pi\in NC(n)}\partial \kappa_{\pi}(a_1,\dots,a_n). 
$$
\end{defn}

Note that the free and infinitesimal free cumulants can also be described via 
$$
\kappa_n(a_1,\dots,a_n)=\sum_{\pi\in NC(n)}\mu(\pi,1_n)\varphi_{\pi}(a_1,\dots,a_n) \text{ and }
\kappa_n'(a_1,\dots,a_n)=\sum_{\pi\in NC(n)}\mu(\pi,1_n)\partial \varphi_{\pi}(a_1,\dots,a_n)
$$
where $\mu$ is the Möbius function of $NC(n)$ (see \cite[Lect. 11]{nica2006lectures} and \cite{fevrier2010infinitesimal,M}).
Note that the free and infinitesimal free cumulants can be used to characterize infinitesimal freeness as follows.
\begin{thm}[\cite{fevrier2010infinitesimal}]
Suppose that $(\cA,\varphi,\varphi')$ is an infinitesimal probability space, and $\cA_i$ is a unital subalgebra of $\cA$ for each $i\in I$. Then $(\cA_i)_{i\in I}$ are infinitesimally free if and only if for each $s\geq 2$ and $i_1,\dots,i_s\in I$ which are not all equal, and for $a_1\in \cA_{i_1},\dots,a_s\in \cA_{i_s}$, we have $\kappa_n(a_1,\dots,a_s)=\kappa_n'(a_1,\dots,a_s)=0.$
\end{thm}

\subsection{Operator-Valued Freeness}
Let $\mathcal{A}$ be a unital algebra and $\mathcal{B}$ be a unital subalgebra of $\mathcal{A}$. A linear map $E:\mathcal{A}\rightarrow \mathcal{B}$ is a \emph{conditional expectation} if 
\begin{eqnarray*}
&&E(b)=b\ \mbox{for all}\ b\in \mathcal{B}; \\
&&E(b_1ab_2)= b_1E(a)b_2 \ \mbox{for all}\ b_1,b_2\in\mathcal{B},\ a\in \mathcal{A}.
\end{eqnarray*} 
Then the triple $(\mathcal{A},\mathcal{B},E)$ is called an \emph{operator-valued probability space} (see \cite{ast}). 
\begin{defn}
Let $(\mathcal{A},\mathcal{B},E)$ be an operator-valued probability space, a family of subalgebras $(\mathcal{A}_i)_{i\in I}$ of $\mathcal{A}$ that contain $\mathcal{B}$ are \emph{free} with respect to $E$ over $\mathcal{B}$ if $E(a_1\cdots a_n)=0$ whenever $a_j\in \mathcal{A}_{i_j}$, $i_1\neq i_2\neq \cdots \neq i_n$, and $E(a_j)=0$ for all $j=1,\dots, n.$
\end{defn}

For an operator-valued probability space $(\mathcal{A},\mathcal{B},E)$, the \emph{operator-valued distribution} of a random variable $x\in \mathcal{A}$ is given by all 
operator-valued moments 
\[
E(xb_1xb_2\cdots b_{n-1}xb_nx)\in \mathcal{B}
\]
where $n\in\mathbb{N}$ and $b_1,\dots,\ b_n\in \mathcal{B}$. In other words, the operator-valued distribution of $x$ is the linear map $\nu:\mathcal{B}\langle X\rangle\to\mathcal{B}$ completely determined by
$$
\nu(Xb_1Xb_2\cdots b_{n}X) = E(xb_1xb_2\cdots b_{n}x)
$$
where $\mathcal{B}\langle X\rangle$ is the free algebra generated by an indeterminate variable $X$ over $\mathcal{B}$. 
Following \cite{S}, the \emph{free cumulants} $\kappa^{\mathcal{B}}_n:\mathcal{A}^n\rightarrow \mathcal{B}$ is defined by the moment-cumulant formula 
\begin{equation}
E(a_1\cdots a_n)=\sum\limits_{\pi\in NC(n)}\kappa^{\mathcal{B}}_{\pi}(a_1,\dots, a_n). 
\end{equation}
Note that the moment cumulants formula can also be defined by the following form
\begin{equation*}
\kappa^{\mathcal{B}}_n(a_1,\cdots,a_n)=\sum\limits_{\pi\in NC(n)}\mu(\pi,1_n)E_{\pi}(a_1,a_2,\cdots, a_n)
\end{equation*}
where $\mu$ is the M\"{o}bius function for $NC(n)$. Moreover, the notion of operator-valued freeness can be characterized by the vanishing of mixed cumulants property, which we stated it as follows. 

\begin{thm}[\cite{S}]
Suppose $(\mathcal{A},\mathcal{B},E)$ is an operator-valued probability space and $(\mathcal{A}_i)_{i\in I}$ are subalgebras of $\mathcal{A}$ contain $\mathcal{B}$. $(\mathcal{A}_i)_{i\in I}$ are free if and only if for each $s\geq 2,$ and $i_1,\dots,i_s\in I$ which are not all equal and for $a_1\in \mathcal{A}_{i_1},\dots,a_s\in \mathcal{A}_{i_s}$, we have $\kappa_s^{\mathcal{B}}(a_1,\dots,a_s)=0.$    
\end{thm}

For a given operator-valued probability space $(\cA,\mathcal{B},E)$, if we further assume that $\mathcal{A}$ is a unital $C^*$-algebra, $\mathcal{B}$ is a unital $C^*$-subalgebra, and $E$ is completely positive, then $(\mathcal{A},\mathcal{B},E)$ is called a \emph{$C^*$-operator-valued probability space}. 
For $x\in \mathcal{A}$,    
we say $x>0$ if $x$ is positive and invertible, and then the operator upper half plane is defined by $H^+(\mathcal{B})=\{b\in\mathcal{B} \mid \mathrm{Im}(b)=\frac{1}{2i}(b-b^*)>0\ \}.$ Note that if $x=x^*\in\mathcal{A}$ and $b\in H^+(\mathcal{B})$, then $b-x$ is invertible. 

For a fixed selfadjoint random variable $x\in \mathcal{A}$, the \emph{Cauchy transform of $x$} is defined by $$G_x(b)=E[(b-x)^{-1}]$$ 
for all $b\in H^+(\mathcal{B})$. Note that $G_x(b)$ is invertible for $b\in H^+(\mathcal{B})$. Then we let 
$$
F_x(b)=G_x(b)^{-1} \text{ and then } h_x(b)=F_x(b)-b.
$$ 

For each $n\in \mathbb{N}$, if we consider its fully matricial extension $G_x^{(n)}$ which is defined by
$$
G_{x}^{(n)}(b)=E\otimes 1_{n}[(b-x\otimes 1)^{-1}]
$$
for $b\in M_n(\mathcal{B})$ that $b-x\otimes 1$ is invertible, then it is known that the sequence $\{G_x^{(n)}\}_{n=1}^\infty$ encodes the operator-valued distribution of $x$. Note that $G_x^{(n)}$ is a holomorphic map that sends the upper half plane $H^+(M_n(\mathcal{B}))$ into the lower half plane $H^-(M_n(\mathcal{B})):=-H^+(M_n(\mathcal{B}))$. In analytic aspect, $G_x^{(n)}$ on $H^+(M_n(\mathcal{B}))$ essentially has the same behavior of $G_x=G_x^{(1)}$ on $H^+(\mathcal{B})$. We shall restrict our analysis on to $G_x$. 

In addition, the operator-valued $R$-transform and $S$-transform were first introduced in \cite{ast}. Then Dykema \cite{K} provided an new approach of these transforms. Let us sate the result as follows. Given $x=x^*\in\mathcal{A}$, there is an open subset $V_x$ of $H^-(\mathcal{B})\cap B(0,r)$ for some $r>0$ that $0\in \overline{V}_x$ such that
$G_x^{\langle -1 \rangle}$, the composition inverse of $G_x$, is well-defined on $V_x$. Thus, the \emph{$R$-transform of $x$} is defined by 
\begin{equation}\label{Reqn}
R_x(b)=G_x^{\langle -1\rangle}(b)-b^{-1} \text{ for all }b\in B(0,r).
\end{equation}
On the other hand, given element $x=x^*\in\mathcal{A}$, the moment generated function of $x$ is defined by
$$
\psi_x(b)=E\left((1-bx)^{-1}-1\right) \text{ for }\|b\|<\frac{1}{\|x\|}.
$$
Note that $\psi_x$ is Fr\'{e}chet analytic on a neighborhood of the origin and $\psi'_x(0)(\cdot)=(\cdot)E(x)$. Therefore if we further assume that $E(x)$ is invertible, then $\psi'_x(0)$ is invertible, which implies that $\psi_x$ is invertible around $0$ by the inverse function theorem. Then the \emph{$S$-transform of $x$} is defined by 
$$
S_x(b)=b^{-1} (1+b)\psi_x^{\langle-1\rangle}(b), \text{ for }\|b\| \text{ small enough.} 
$$
Suppose that $(\mathcal{A},\mathcal{B},E)$ is a $C^*$-operator-valued probability space, and $x=x^*$ and $y=y^*$ are two elements in $\mathcal{A}$. The free additive and multiplicative convolutions are given by the following statement. 
\begin{thm}
If $x$ and $y$ are free, then 
\begin{equation}\label{Rconvolution}
    R_{x+y}(b)=R_x(b)+R_y(b), \text{ for }\|b\| \text{ small enough} \qquad  \text{(see \cite{ast} and \cite{S}).}
\end{equation}
Moreover, if both $E(x)$ and $E(y)$ are invertible, then we also have
\begin{equation}\label{Sconvolution}
    S_{xy}(b)=S_y(b)S_x\left(S_y(b)^{-1}bS_y(b)\right), \text{ for }\|b\| \text{ small enough} \qquad  \text{(see \cite{K}).}\end{equation} 
\end{thm}
\begin{rem}
$(\mathcal{A},\mathcal{B},E)$ is called an \emph{operator-valued Banach probability space} if $\mathcal{A}$ is a unital Banach algebra, $\mathcal{B}$ is a subalgebra of $\mathcal{A}$ that containing $1_{\mathcal{A}}$, and $E:\mathcal{A}\to \mathcal{B}$ is a linear, bounded, $\mathcal{B}$-$\mathcal{B}$ bimodule projection. Note that \eqref{Rconvolution} and \eqref{Sconvolution} also hold if $(\mathcal{A},\mathcal{B},E)$ is only an operator-valued Banach non-commutative probability space. 
\end{rem}
In \cite{v2000}, Voiculescu provided us the subordination functions for operator-valued free additive convolution that we state as follows.     
\begin{thm}
For a $C^*$-operator-valued probability space $(\mathcal{A},\mathcal{B},E)$ and $x,y$ are selfadjoint random variables in $\mathcal{A}$ which are free, there exists a unique pair of Fr\'{e}chet analytic maps $\omega_1,\omega_2:H^+(\mathcal{B})\rightarrow H^+(\mathcal{B})$ such that \\
(1)\ $\operatorname{Im}(\omega_j(b))\geq \operatorname{Im}(b)$ for all $b\in H^+(\mathcal{B})$ and $j=1,2$\ ;\\
(2)\ $F_x(\omega_1(b))+b=F_y(\omega_2(b))+b=\omega_1(b)+\omega_2(b)$\ for all $b\in H^+(\mathcal{B})$\ ; \\
(3)\ $G_x(\omega_1(b))=G_y(\omega_2(b))=G_{x+y}(b)$\ for all\ $b\in H^+(\mathcal{B})$. 
\end{thm}

\section{Operator Valued Infinitesimal Probability}

The notion of OVI freeness was first introduced in \cite{CS}. In this section, we recall the definition of OVI probability spaces and the notion of infinitesimal freeness with amalgamation (Subsection \ref{21}). Then, we introduce the OVI cumulants and prove that the OVI freeness of subalgebras is equivalent to a vanishing condition for mixed cumulants and mixed infinitesimal cumulants in Subsection \ref{22}. Lastly, in Subsection \ref{23}, we will provide an application that extends the scalar version of infinitesimal freeness to the matrix version.

\subsection{OVI Freeness}\label{21}

In this subsection, we will begin by reviewing the definition of operator-valued infinitesimal probability spaces and the concept of infinitesimal freeness in the operator-valued setting. Additionally, we will introduce the notion of upper triangular probability spaces, which offers an alternative viewpoint on OVI freeness. This idea is inspired by \cite{BGN}. A crucial difference here is that we replace $\mathcal{A}$ by $\widetilde{\mathcal{A}}$; this is the part that was missing in \cite[Remark 4.8]{fevrier2010infinitesimal}.

Let $(\mathcal{A},\mathcal{B},E)$ be an operator-valued probability space (see \cite{ast}). Let $E':\mathcal{A}\rightarrow \mathcal{B}$ be a linear map such that $E'(1)=0$ and 
\begin{align}
E'(b_1ab_2)=b_1E'(a)b_2 \ \mbox{for all}\ b_1,b_2\in\mathcal{B}, a\in \mathcal{A}.
\end{align}
Then, $(\mathcal{A},\mathcal{B},E,E')$ is called an \emph{OVI probability space}. 

Given an OVI probability space $(\mathcal{A},\mathcal{B},E, \ab E')$ and $x\in\mathcal{A}$, the \emph{infinitesimal distribution} of $x$ is the pair of linear maps $(\nu,\nu')$ where $\nu$ is the distribution of $x$ and $\nu':\mathcal{B}\langle X\rangle\to\mathcal{B}$ is the map completely determined by
$$
\nu'(Xb_1Xb_2\cdots b_{k}X) = E'(xb_1xb_2\cdots b_{k}x).
$$

\begin{defn}
Given an OVI probability space $(\mathcal{A},\ab\mathcal{B},\ab E, E')$, the sub-algebras $(\mathcal{A}_i)_{i\in I}$ of $\mathcal{A}$ that contains $\mathcal{B}$ are called \emph{infinitesimally
free} with respect to $(E,E')$ (or \emph{OVI free}) if for $i_1,i_2,\dots,i_n\in I$, $i_1\neq i_2 \neq i_3\cdots\neq i_n$, and $a_j\in \mathcal{A}_{i_j}$ with $E(a_{j})=0$ for all $j=1,2,\dots,n$, the following two conditions hold: 
\begin{eqnarray*}
E(a_1\cdots a_n) &=& 0\ \ ; \\ 
E'(a_1\cdots a_n) &=& \sum\limits_{j=1}^n E(a_1\cdots a_{j-1}E'(a_j)a_{j+1}\cdots a_n). 
\end{eqnarray*}
\end{defn}
Since elements of $\mathcal B$ need not commute with the random variables $a_1,\dots, \ab a_n$, the factor $E'(a_j)$ may not be pulled out in front of $E$; however, we observe that second formula of infinitesimal freeness can be written as the follows:
\begin{equation}
E'(a_1\cdots a_n) =
           \left\{
              \begin{array}{lr}
                 E(a_1E(a_2(E(a_3\cdots E(a_{\frac{n-1}{2}}E'(a_{\frac{n+1}{2}})a_{\frac{n+3}{2}})\cdots a_{n-2})a_{n-1})a_n)  \\
                  \qquad\mbox{  if}\ n\ \mbox{is odd and}\ i_1=i_n, i_2=i_{n-1},\dots , i_{\frac{n-1}{2}}=i_{\frac{n+3}{2}} \\
              0\ \ \mbox{otherwise}\  \\
                  \end{array}
           \right. .
\end{equation}
This follows from known properties of freeness with amalgamation (see \cite{S,ast}). We said a set $\{a_i\mid i\in I\}$ is infinitesimally free with respect to $(E,E')$ if the unital algebras $alg(1,a_i)$ generated by $a_i (i\in I)$ form a infinitesimally free family.  

In fact, we have another point of view to see the OVI freeness which is related to upper triangular $2\times 2$ matrices. For a given OVI probability space $(\mathcal{A},\mathcal{B},E,E')$, we define sub-algebras $\widetilde{\mathcal{A}}$ and $\widetilde{\mathcal{B}}$ of $M_2(\mathcal{A})$ as follows
\[
\widetilde{\mathcal{A}}=
\left \{
\begin{bmatrix}
a & a' \\
0 & a 
\end{bmatrix} \Bigg |
a,a'\in\mathcal{A}
\right \}\ \mbox{and}\ \ 
\widetilde{\mathcal{B}}=
\left \{
\begin{bmatrix}
b & b' \\
0 & b 
\end{bmatrix} \Bigg |
b,b'\in\mathcal{B}
\right \}.
\]
Also, we define a map $\widetilde{E}$ from $\widetilde{\mathcal{A}}$ to $\widetilde{\mathcal{B}}$ by
\[
\widetilde{E}\Bigg( 
\begin{bmatrix}
a & a' \\
0 & a 
\end{bmatrix} 
\Bigg ) =
\begin{bmatrix}
E(a) & E(a')+E'(a) \\
0 & E(a) 
\end{bmatrix}.
\]
It is easy to see that $\widetilde{E}$ is a conditional expectation. This makes $(\widetilde{\mathcal{A}},\widetilde{\mathcal{B}},\widetilde{E})$ into an operator-valued probability space in the sense of \cite{ast}. We call it the \emph{upper triangular probability space} induced by $(\mathcal{A},\mathcal{B},E,E')$. Note that the algebras $\widetilde{\mathcal{A}}$ and $\widetilde{\mathcal{B}}$ are not selfadjoint, so we do not have a natural notion of positivity for $\widetilde{E}$.

\begin{rem}\label{rem1}
Let $(\mathcal{A},\mathcal{B},E,E')$ be an OVI probability space and $t$ be a variable such that $t^2=0$ and $ta=at$ for all $a\in \mathcal{A}$, then $\mathcal{A}+t\mathcal{A}$ is an two-dimensional unital algebra with the multiplication 
$$
(a+tb) \cdot (c+td) =(ac)+t(ad+bc),
$$ for $a,b,c$, and $d$ are in $\mathcal{A}$. Then,
$\mathcal{B}+t\mathcal{B}$ is a unital subalgebra of $\mathcal{A}+t\mathcal{A}$. Observe that 
$$
(E+tE')(a+tb)=E(a)+t(E(b)+E'(a))+t^2 E'(b) = E(a)+t (E(b)+E'(a)), 
$$ so that we shall define the map $(E+tE'):\mathcal{A}+t\mathcal{A}\rightarrow \mathcal{B}+t\mathcal{B}$ by 
$$
(E+tE')(a+tb)=E(a)+t(E(a')+E'(a)). 
$$ It is obvious that $E+tE'$ is a conditional expectation, and $(\mathcal{A}+t\mathcal{A},\mathcal{B}+t\mathcal{B},E+tE')$ is an operator-valued probability space. In fact, this is another way to describe the upper triangular probability space $(\widetilde{\mathcal{A}},\widetilde{\mathcal{B}},\widetilde{E})$. 
\end{rem}

Assume that $(\mathcal{A},\mathcal{B},E,E')$ is an OVI probability space and $(\widetilde{\mathcal{A}},\widetilde{\mathcal{B}},\widetilde{E})$ be its corresponding upper triangular probability space. The following proposition establishes the connection between these two spaces.   

\begin{prop} \label{thm1}
Sub-algebras $(\mathcal{A}_i)_{i\in I}$ that contain $\mathcal{B}$ are infinitesimally free with respect to $(E,E')$ if and only if $(\widetilde{\mathcal{A}}_i)_{i\in I}$ are free with respect to $\widetilde{E}$, where 
\[
\widetilde{\mathcal{A}}_i=
\left \{
\begin{bmatrix}
a & a' \\
0 & a 
\end{bmatrix} 
\Bigg |
a,a'\in\mathcal{A}_i
\right \}\ \mbox{for each}\ i\in I.
\]   
\end{prop}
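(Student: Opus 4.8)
The plan is to reduce the statement to a direct computation with the $2\times 2$ upper triangular matrices, comparing the two freeness conditions entry by entry. First I would record the two basic algebraic facts that drive everything. Writing $\widetilde{a}_j=\begin{bmatrix} a_j & a_j' \\ 0 & a_j \end{bmatrix}$, an easy induction shows that the product $\widetilde{a}_1\cdots\widetilde{a}_n$ has both diagonal entries equal to $a_1\cdots a_n$ and upper-right entry $\sum_{j=1}^n a_1\cdots a_{j-1}a_j' a_{j+1}\cdots a_n$. Applying $\widetilde{\mathbb{E}}$ and abbreviating $A=a_1\cdots a_n$ and $A'=\sum_{j=1}^n a_1\cdots a_{j-1}a_j' a_{j+1}\cdots a_n$, the $(1,1)$ entry of $\widetilde{\mathbb{E}}(\widetilde{a}_1\cdots\widetilde{a}_n)$ is $\mathbb{E}(A)$ and the $(1,2)$ entry is $\mathbb{E}(A')+\mathbb{E}'(A)$. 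Second, the centering condition $\widetilde{\mathbb{E}}(\widetilde{a}_j)=0$ unwinds to the pair of scalar conditions $\mathbb{E}(a_j)=0$ and $\mathbb{E}(a_j')=-\mathbb{E}'(a_j)$.

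For the forward implication, I would assume $\mathcal{A}_1,\dots,\mathcal{A}_n$ are infinitesimally free and take centered $\widetilde{a}_j\in\widetilde{\mathcal{A}}_{i_j}$ with alternating indices. The $(1,1)$ entry vanishes at once, since $\mathbb{E}(a_j)=0$ and the first infinitesimal-freeness condition is precisely ordinary freeness over $\mathcal{B}$ with respect to $\mathbb{E}$. For the $(1,2)$ entry, I would expand $\mathbb{E}'(A)$ using the second infinitesimal-freeness relation and add it to $\mathbb{E}(A')$; the two sums combine into $\sum_{j=1}^n \mathbb{E}(a_1\cdots a_{j-1}(a_j'+\mathbb{E}'(a_j))a_{j+1}\cdots a_n)$. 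The point is that $c_j:=a_j'+\mathbb{E}'(a_j)$ again lies in $\mathcal{A}_{i_j}$ and is $\mathbb{E}$-centered --- here the centering relation $\mathbb{E}(a_j')=-\mathbb{E}'(a_j)$ is exactly what yields $\mathbb{E}(c_j)=0$ --- so each summand is an $\mathbb{E}$-moment of an alternating centered word and vanishes by freeness over $\mathcal{B}$. Hence $\widetilde{\mathbb{E}}(\widetilde{a}_1\cdots\widetilde{a}_n)=0$.

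For the converse, given $a_j\in\mathcal{A}_{i_j}$ with $\mathbb{E}(a_j)=0$ and alternating indices, I would lift each to the element $\widetilde{a}_j=\begin{bmatrix} a_j & -\mathbb{E}'(a_j) \\ 0 & a_j \end{bmatrix}$; this is legitimate because $\mathbb{E}'(a_j)\in\mathcal{B}\subseteq\mathcal{A}_{i_j}$, and one checks $\widetilde{\mathbb{E}}(\widetilde{a}_j)=0$ using $\mathbb{E}|_{\mathcal{B}}=\mathrm{id}$. Freeness of $\widetilde{\mathcal{A}}_1,\dots,\widetilde{\mathcal{A}}_n$ then forces $\widetilde{\mathbb{E}}(\widetilde{a}_1\cdots\widetilde{a}_n)=0$. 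Reading off the $(1,1)$ entry gives the first infinitesimal-freeness condition $\mathbb{E}(a_1\cdots a_n)=0$, and reading off the $(1,2)$ entry gives $\mathbb{E}'(A)=-\mathbb{E}(A')=\sum_{j=1}^n \mathbb{E}(a_1\cdots a_{j-1}\mathbb{E}'(a_j)a_{j+1}\cdots a_n)$, which is exactly the second condition.

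I expect the only genuine obstacle to be bookkeeping rather than anything conceptual: verifying the formula for the upper-right entry of the matrix product, and then recognizing in the forward direction that adding the two sums produces the single centered insertion $c_j=a_j'+\mathbb{E}'(a_j)$. Once that substitution is spotted, both directions follow uniformly from ordinary operator-valued freeness over $\mathcal{B}$ applied to alternating centered words, with the infinitesimal data carried entirely by the upper-right corner.
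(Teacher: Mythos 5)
Your proposal is correct and takes essentially the same approach as the paper: the converse uses the identical lift with upper-right entry $-\mathbb{E'}(a_j)$, and in the forward direction your centered element $c_j=a_j'+\mathbb{E'}(a_j)$ is exactly the paper's $(a_j')^\circ=a_j'-\mathbb{E}(a_j')$. The two arguments differ only in bookkeeping, namely whether the infinitesimal-freeness expansion of $\mathbb{E'}(a_1\cdots a_n)$ is invoked before or after that centering step.
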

\begin{proof}
First, we assume that $(\mathcal{A}_i)_{i\in I}$ are infinitesimally free. Suppose that $A_1,\dots, A_k$ are elements in $\widetilde{\mathcal{A}}$ such that $A_j\in \widetilde{\mathcal{A}}_{i_j}$ such that $\widetilde{E}(A_j)=0$ where $i_1,\dots, i_k\in I$ with $i_1\neq i_2 \neq \cdots \neq i_k.$ 
Note that for each $j$, 
\[ A_j=
\begin{bmatrix}
a_j & a'_j \\_
0 & a_j
\end{bmatrix} \ \mbox{for some}\ a_j,a'_j\in \mathcal{A}_{i_j}. 
\] 
Moreover, $\widetilde{E}(A_j)=0$ implies that $E(a_j)=0$ and $E(a'_j)=-E'(a_j).$
Note that
\begin{eqnarray*}
\lefteqn{\widetilde{E}(A_1\cdots A_k) =
\widetilde{E} \Bigg (
\begin{bmatrix}
a_1\cdots a_k & \sum\limits_{j=1}^k a_1\cdots a_{j-1}a'_ja_{j+1}\cdots a_k \\
0 & a_1\cdots a_k 
\end{bmatrix} \Bigg )} \\
&=&
\begin{bmatrix}
E(a_1\cdots a_k) & E'(a_1\cdots a_k)+\sum\limits_{j=1}^k E(a_1\cdots a_{j-1}a'_ja_{j+1}\cdots a_k )  \\
0                         & E(a_1\cdots a_k) 
\end{bmatrix}
\end{eqnarray*}
  
Observe that for each $j$, if we let $(a'_j)^\circ= a'_j-E(a'_j)$, then 
\begin{eqnarray*}
\lefteqn{E(a_1\cdots a_{j-1}a'_ja_{j+1}\cdots a_k )}\\
&=& E(a_1\cdots a_{j-1}(a'_j)^\circ a_{j+1} \cdots a_k)  
  +E(a_1\cdots a_{j-1}E(a'_j)a_{j+1}\cdots a_k) \\
&=& E(a_1\cdots a_{j-1}(a'_j)^\circ a_{j+1} \cdots a_k) 
  -E(a_1\cdots a_{j-1}E'(a_j)a_{j+1}\cdots a_k).
\end{eqnarray*}
By freeness, we have $E(a_1\cdots a_k)=0$ and also
$$E(a_1\cdots a_{j-1}(a'_j)^\circ a_{j+1}\cdots a_k)=0$$ for each $j$. 
Thus, $\widetilde{E}(A_1\cdots A_k)_{1,1}$ and $\widetilde{E}(A_1\cdots A_k)_{2,2}$ vanish and $\widetilde{E}(A_1\cdots A_k)_{1,2}$ can be rewritten as
\[
E'(a_1\cdots a_k)-\sum\limits_{j=1}^k E(a_1\cdots a_{j-1}E'(a_j)a_{j+1}\cdots a_k ), 
\]
which also vanishes by infinitesimal freeness. 

Conversely, suppose that $(\widetilde{\mathcal{A}}_i)_{i\in I}$ are free with respect to $\widetilde{E}$. Let $a_1,\dots,a_k$ be elements in $\mathcal{A}$ such that $a_j\in \mathcal{A}_{i_j}$ with $E(a_j)=0$ where $i_1,\dots, i_k\in I$ with $i_1\neq i_2 \neq \cdots \neq i_k.$ 

For each $j$, we define 
\[A_j=
\begin{bmatrix}
a_j & -E'(a_j) \\
0   & a_j
\end{bmatrix}.
\]
Then, it's obvious that $A_j\in\widetilde{\mathcal{A}}_{i_j}$ and  
\[
\widetilde{E}(A_j)=
\begin{bmatrix}
E(a_j) & E'(a_j)- E'(a_j) \\
0 & E(a_j)
\end{bmatrix} =
\begin{bmatrix}
0 & 0 \\
0 & 0
\end{bmatrix}.
\]
By freeness, we obtain $\widetilde{E}(A_1\cdots A_k)=0.$
Note that 
\begin{eqnarray*}
\lefteqn{0=\widetilde{E}(A_1\cdots A_k)}\\
&=&
\widetilde{E}
\left(  
\begin{bmatrix}
a_1\cdots a_k  & -\sum\limits_{j=1}^k a_1\cdots a_{j-1}E'(a_j)a_{j+1}\cdots a_k \\
0 & a_1\cdots a_k
\end{bmatrix}
\right)  \\
&=&
\begin{bmatrix}
E(a_1\cdots a_k)  & E'(a_1\cdots a_k)-\sum\limits_{j=1}^k E(a_1\cdots a_{j-1}E'(a_j)a_{j+1}\cdots a_k) \\
0 & E(a_1\cdots a_k)
\end{bmatrix}.
\end{eqnarray*} 
Hence, we obtain
\begin{eqnarray*}
E(a_1\cdots a_k)&=& 0\ ; \\
E'(a_1\cdots a_k) &=& \sum\limits_{j=1}^k E(a_1\cdots a_{j-1}E'(a_j)a_{j+1}\cdots a_k),
\end{eqnarray*}
which completes the proof. 
\end{proof}


\subsection{OVI Free Cumulants}\label{22}
For a given 
OVI probability space $(\mathcal{A},\mathcal{B},E,E')$, we have (operator- \ab valued) free cumulants $(\kappa^{\mathcal{B}}_n)_n$. In this section, we will go further to define the notion of (operator-valued) infinitesimal free cumulants and study its properties.  

Let $(\mathcal{A},\mathcal{B},E,E')$ be an OVI probability 
space. For a given $\pi \in NC(n)$ and $V\in \pi$, we consider the corresponding moment maps $\partial_V E_{\pi}$ which are defined just
as operator-valued moments $E_{\pi}$ associated to $\pi$ (see \cite[Sections 2.1 and 3.2]{S}), but replacing, for  block $V$,  $E$ by $E'$. Thus we define 
$$
\partial E_{\pi}(a_1,\dots,a_n)=\sum\limits_{V\in \pi}\partial_V E_{\pi}(a_1,\dots,a_n).
$$
For example, if $\pi =\{(1),(2,5),(3,4)\}$ and $V=\{(2,5)\}$, then $\partial_V E_{\pi}$ is given by
\[
\partial_V E_{\pi}(a_1,\dots,a_5)=E(a_1E'(a_2E(a_3a_4)a_5)).
\]
Also, if $V_1 = \{1\}$, $V_2 = \{2, 5\}$ and $V_3 = \{3, 4\}$ then
\begin{eqnarray*}
\partial E_{\pi}(a_1,\dots,a_5)
&=& \partial_{V_1} E_{\pi}(a_1,\dots,a_5)
+
\partial_{V_2} E_{\pi}(a_1,\dots,a_5)
+
\partial_{V_3} E_{\pi}(a_1,\dots,a_5) \\
&=& E'(a_1E(a_2E(a_3a_4)a_5))+E(a_1E'(a_2E(a_3a_4)a_5)) 
+
E(a_1E(a_2E'(a_3a_4)a_5)).
\end{eqnarray*}
\begin{defn} 
Suppose that $(\mathcal{A},\mathcal{B},E,E')$ is an OVI probability space. 
We define the \emph{OVI free cumulants} $\{\partial \kappa^{\mathcal{B}}_{n}:\mathcal{A}^n\rightarrow \mathcal{B}\}_{n\geq 1}$ to be the family of multilinear maps such that for all $n\in\mathbb{N}$ and $a_1,a_2,\ldots,a_n\in\mathcal{A}$,
\begin{equation}\label{eqn: OVI moment-cumulant formula}
\partial \kappa^{\mathcal{B}}_n(a_1,a_2,\ldots, a_n)=\sum\limits_{\pi \in NC(n)}\mu(\pi,1_n)\partial E_{\pi}(a_1,a_2,\ldots,a_n).  
\end{equation}
\end{defn}
Given an OVI probability space $(\mathcal{A},\mathcal{B},E, \ab E')$, we let $(\kappa^{\mathcal{B}}_n)_n$ and $(\partial\kappa^{\mathcal{B}}_n)_n$ be the free and infinitesimal free cumulants of $(\mathcal{A},\mathcal{B},E,\ab E')$.   Then consider the corresponding upper triangular probability space $(\widetilde{\mathcal{A}},\widetilde{\mathcal{B}},\widetilde{E})$ and their free cumulants $(\widetilde{\kappa}_n)_n$ of $(\widetilde{\mathcal{A}},\widetilde{\mathcal{B}},\widetilde{E}).$ 

We now state the following lemma that builds the connection between the OVI setting and the operator-valued setting in the cumulants aspect.
\begin{lem} \label{lem3}
Suppose that $A_1,\dots,A_n\in \widetilde{\mathcal{A}}$ with 
\[
A_i=\begin{bmatrix}
a_i & a'_i \\
0  & a_i
\end{bmatrix}
\] for each $i=1,\dots, n$. 
Then, 
\begin{multline}\label{kappa1}
\widetilde{\kappa}_n(A_1,\dots, A_n) = \\
\begin{bmatrix}
\kappa^{\mathcal{B}}_n(a_1,\dots, a_n) & \sum\limits_{j=1}^n\kappa^{\mathcal{B}}_n(a_1,\dots,a_{j-1},a'_j,a_{j+1},\dots, a_n)+\partial\kappa^{\mathcal{B}}_n(a_1,\dots,a_n) \\
0 & \kappa^{\mathcal{B}}_n(a_1,\dots, a_n)
\end{bmatrix}. 
\end{multline}
\end{lem}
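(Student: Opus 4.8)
The plan is to reduce the whole statement to a single identity at the level of the nested moment maps and then apply Möbius inversion. First I would pass to the description of the upper triangular space furnished by Remark \ref{rem1}, identifying $\widetilde{\mathcal{A}}$ with $\mathcal{A}+t\mathcal{A}$, where $t$ is central with $t^2=0$, via $A_i \leftrightarrow a_i+t\,a'_i$, and $\widetilde{\mathbb{E}}$ with $\mathbb{E}+t\mathbb{E}'$. Under this identification both the multiplication and the conditional expectation are the evident $t$-linear extensions of the operations on $\mathcal{A}$, so any expression built from products and applications of $\widetilde{\mathbb{E}}$ is computed by working in $\mathcal{A}+t\mathcal{A}$ and discarding all terms of $t$-degree at least two. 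In particular this applies to the nested moment map $\widetilde{\mathbb{E}}_\pi$ for each $\pi\in NC(n)$, which carries one application of $\widetilde{\mathbb{E}}$ per block.

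The key step is the moment-map identity
\[
\widetilde{\mathbb{E}}_\pi(A_1,\dots,A_n) = \mathbb{E}_\pi(a_1,\dots,a_n) + t\Big(\sum_{j=1}^n \mathbb{E}_\pi(a_1,\dots,a'_j,\dots,a_n) + \partial\mathbb{E}_\pi(a_1,\dots,a_n)\Big),
\]
which I would establish by a Leibniz-type expansion, formalizable by induction on the number of blocks of $\pi$ (peeling off the innermost block). The degree-zero coefficient uses $a_i$ in every slot and $\mathbb{E}$ for every block, yielding $\mathbb{E}_\pi$. Because $t^2=0$, a degree-one term arises exactly by selecting the $t$-part of precisely one factor while keeping the $t^0$-part of all others: selecting $t\,a'_j$ from the input $A_j$ reproduces $\mathbb{E}_\pi$ with $a_j$ replaced by $a'_j$, while selecting $t\mathbb{E}'$ in place of $\mathbb{E}$ for a block $V$ reproduces precisely $\partial\mathbb{E}_{\pi,V}$. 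Summing over the $n$ input choices and the $|\pi|$ block choices gives the bracketed expression, since $\partial\mathbb{E}_\pi=\sum_{V\in\pi}\partial\mathbb{E}_{\pi,V}$ by definition.

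Finally I would apply the Möbius inversion $\sum_{\pi\in NC(n)}\mu(\pi,1_n)(\cdot)$ to both sides. As $t$ is central with $t^2=0$, the resulting sum splits into its $t^0$- and $t^1$-components, and the three cumulant families $\widetilde\kappa_n$, $\kappa^{\mathcal{B}}_n$, $\partial\kappa^{\mathcal{B}}_n$ are all defined by this very same Möbius sum. Thus the $t^0$-component is $\sum_\pi \mu(\pi,1_n)\mathbb{E}_\pi(a_1,\dots,a_n)=\kappa^{\mathcal{B}}_n(a_1,\dots,a_n)$, while the $t^1$-component is $\sum_{j=1}^n \kappa^{\mathcal{B}}_n(a_1,\dots,a'_j,\dots,a_n) + \partial\kappa^{\mathcal{B}}_n(a_1,\dots,a_n)$. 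Translating back from the $t$-notation to the matrix form of $\widetilde\kappa_n(A_1,\dots,A_n)$ then recovers \eqref{kappa1}.

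I expect the main obstacle to be the careful bookkeeping in the moment-map identity: one must verify that the degree-one part of the expansion of $\widetilde{\mathbb{E}}_\pi$ separates cleanly into the ``one primed input'' terms and the ``one primed expectation'' terms, with no surviving cross terms precisely because $t^2=0$, and that the nested structure $\widetilde{\mathbb{E}}_\pi$ genuinely respects the $t$-grading (which it does since $t$ is central and $\widetilde{\mathbb{E}}$ intertwines the two descriptions). Once the induction on the block structure of $\pi$ is set up, everything else is formal linear algebra over $\mathbb{C}[t]/(t^2)$.
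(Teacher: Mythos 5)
Your proposal is correct and takes essentially the same route as the paper: the paper also reduces the lemma to the block-wise moment-map identity for $\widetilde{\mathbb{E}}_\pi$, verifies that identity by identifying $\widetilde{\mathbb{E}}_{\pi}(A_1,\dots,A_n)$ with $(\mathbb{E}+t\mathbb{E}')_{\pi}(a_1+ta'_1,\dots,a_n+ta'_n)$ over $\mathbb{C}[t]/(t^2)$ via Remark \ref{rem1}, and then concludes by Möbius inversion. The only difference is that you spell out the degree-one bookkeeping (induction on blocks, Leibniz expansion) where the paper simply asserts it is clear.
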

\begin{proof}
It suffices to show that 
 for each $n\in \mathbb{N}$ and $\pi\in NC(n)$, we have 
\begin{eqnarray*}
\lefteqn{\widetilde{E}_{\pi}(A_1,\dots ,A_n) =} \\   
&&\kern-1em\begin{bmatrix}
E_{\pi}(a_1,\dots ,a_n) & \sum\limits_{j=1}^n E_{\pi}(a_1,\dots, a_{j-1},a'_{j},a_{j+1},\dots, a_n)+\partial E_{\pi}(a_1,\dots, a_n)\\
0 & E_{\pi}(a_1,\dots, a_n)
\end{bmatrix}.
\end{eqnarray*}
If we let $\pi\in NC(n)$ and $A_1,\cdots,A_n\in \widetilde{\mathcal{A}}$, by Remark \ref{rem1}, $\widetilde{E}_{\pi}(A_1,\dots,A_n)$ can be identified with $(E+tE')_{\pi}(a_1+ta'_1,\dots,a_n+ta'_n)$. Then it is clear that the constant term of $\widetilde{E}_{\pi}(A_1,\dots,A_n)$ is $E_{\pi}(a_1,\dots,a_n)$ and the term with first order is 
\[
E'_{\pi}(a_1,\dots,a_n)+\sum\limits_{j=1}^nE_{\pi}(a_1,\dots,a_{j-1},a'_j,a_{j+1},\dots,a_n), 
\] which complete the proof.
\end{proof} 
Now, we provide our main result in this section. 
\begin{thm}\label{thm3}
Given an OVI probability space $(\mathcal{A},\ab \mathcal{B}, \ab E,E')$, and 
$\mathcal{A}_1,\dots, \mathcal{A}_n$ are unital subalgebras of $\mathcal{A}$ that contain $\mathcal{B}$. Then the following two statements are equivalent: \\
(1). $\mathcal{A}_1,\dots, \mathcal{A}_n$ are infinitesimally free with respect to $(E,E').$ \\
(2). For every $n\geq 2$ and $i_1,\dots, i_s\in [n]$ which are not all equal, and for $a_1\in \mathcal{A}_{i_1},\dots, a_s\in\mathcal{A}_{i_s}$, we have $\kappa^{\mathcal{B}}_s(a_1,\dots,a_s)=\partial\kappa^{\mathcal{B}}_s(a_1,\dots,a_s)=0$.  
\end{thm}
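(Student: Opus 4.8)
The plan is to reduce the statement to the standard cumulant characterization of ordinary operator-valued freeness by passing to the upper triangular probability space $(\widetilde{\mathcal{A}},\widetilde{\mathcal{B}},\widetilde{\mathbb{E}})$. By Proposition \ref{thm1}, statement (1) is equivalent to the ordinary freeness of $\widetilde{\mathcal{A}}_1,\dots,\widetilde{\mathcal{A}}_n$ over $\widetilde{\mathcal{B}}$ with respect to $\widetilde{\mathbb{E}}$. Since the hypothesis $\mathcal{B}\subseteq\mathcal{A}_i$ forces $\widetilde{\mathcal{B}}\subseteq\widetilde{\mathcal{A}}_i$, and since the cumulant characterization of freeness from \cite{S} is a purely algebraic consequence of the moment-cumulant relations and M\"obius inversion over $NC(s)$ (so that it applies verbatim to the non-selfadjoint space $\widetilde{\mathcal{A}}$), this freeness is in turn equivalent to the vanishing of every \emph{mixed} cumulant $\widetilde{\kappa}_s(A_1,\dots,A_s)$, meaning those with $A_j\in\widetilde{\mathcal{A}}_{i_j}$ and $i_1,\dots,i_s$ not all equal.

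The heart of the argument is to translate this vanishing into the scalar statement (2) by means of the explicit matrix form of $\widetilde{\kappa}_s$ supplied by Lemma \ref{lem3}. Writing each $A_j$ with entries $a_j,a'_j\in\mathcal{A}_{i_j}$, Lemma \ref{lem3} displays $\widetilde{\kappa}_s(A_1,\dots,A_s)$ as an upper triangular matrix whose diagonal entry is $\kappa^{\mathcal{B}}_s(a_1,\dots,a_s)$ and whose upper right entry is $\sum_{j}\kappa^{\mathcal{B}}_s(a_1,\dots,a'_j,\dots,a_s)+\partial\kappa^{\mathcal{B}}_s(a_1,\dots,a_s)$. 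Hence this matrix vanishes for every admissible choice of the $A_j$ if and only if both of these entries vanish for all such choices.

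For the implication (1)$\Rightarrow$(2) I would first read off from the diagonal that $\kappa^{\mathcal{B}}_s(a_1,\dots,a_s)=0$ for every mixed tuple, which is the first half of (2). The decisive observation is that, because each $a'_j$ lies in the \emph{same} subalgebra $\mathcal{A}_{i_j}$ as $a_j$, every summand $\kappa^{\mathcal{B}}_s(a_1,\dots,a'_j,\dots,a_s)$ is again a mixed cumulant (the index pattern $i_1,\dots,i_s$ is unchanged) and so already vanishes; the upper right entry therefore collapses to $\partial\kappa^{\mathcal{B}}_s(a_1,\dots,a_s)$, and its vanishing yields the second half of (2). For the converse, assuming (2), the very same computation shows that every mixed $\widetilde{\kappa}_s$ vanishes, since the diagonal entry is a mixed $\kappa^{\mathcal{B}}$-cumulant, the sum in the corner is a sum of such cumulants, and the remaining $\partial\kappa^{\mathcal{B}}_s$ term is killed by the second half of (2); Proposition \ref{thm1} then returns the infinitesimal freeness in (1).

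I expect the only genuinely delicate point to be the decoupling step above: one must check that the sum $\sum_{j}\kappa^{\mathcal{B}}_s(a_1,\dots,a'_j,\dots,a_s)$ occurring in the off-diagonal corner does not contaminate the infinitesimal cumulant, and this rests precisely on the fact that substituting $a'_j\in\mathcal{A}_{i_j}$ for $a_j$ preserves the ``not all equal'' condition on the indices. The remaining care needed is to ensure that the classical cumulant criterion is being invoked legitimately over the non-selfadjoint subalgebra $\widetilde{\mathcal{B}}$, which is justified because that criterion never uses any $C^*$- or positivity structure.
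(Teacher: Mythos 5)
Your proposal is correct and follows essentially the same route as the paper: pass to the upper triangular space via Proposition \ref{thm1}, apply the matrix formula of Lemma \ref{lem3}, and invoke the (purely algebraic) vanishing-of-mixed-cumulants characterization of freeness with amalgamation. The only cosmetic difference is that for (1)$\Rightarrow$(2) the paper simply takes $a'_j=0$ so the corner entry is immediately $\partial\kappa^{\mathcal{B}}_s(a_1,\dots,a_s)$, whereas you argue the cross terms $\kappa^{\mathcal{B}}_s(a_1,\dots,a'_j,\dots,a_s)$ vanish because they are again mixed; both are valid.
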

\begin{proof} Assume that condition (1) is true. For $n\geq 2,$ we consider $a_j\in \mathcal{A}_{i_j}$ where $i_1,\dots,i_s\in [n]$ are not all equal. Then for each $j=1,\dots,s$, we let 
\[
A_j=
\begin{bmatrix}
a_j & 0 \\
0 & a_j
\end{bmatrix}. 
\]
It is obvious that $A_j\in \widetilde{\mathcal{A}}_{i_j}$ for each $j$. 
Now, by our assumption and Proposition \ref{thm1}, we obtain that $\widetilde{\mathcal{A}}_1,\dots,\widetilde{\mathcal{A}}_n$ are free with respect to $\widetilde{E}$, which implies that 
$\widetilde{\kappa}_s(A_1,\dots,A_s)=0$. 
Thus, by \eqref{kappa1}, we have
\[
0=\widetilde{\kappa}_s(A_1,\dots,A_s)=
\begin{bmatrix}
\kappa^{\mathcal{B}}_s(a_1,\dots,a_s) & \partial\kappa^{\mathcal{B}}_s(a_1,\dots,a_s) \\
0 & \kappa^{\mathcal{B}}_s(a_1,\dots, a_s)
\end{bmatrix}.
\]
Hence, we conclude that  $\kappa^{\mathcal{B}}_s(a_1,\dots,a_s)=\partial\kappa^{\mathcal{B}}_s(a_1,\dots,a_s)=0$. 

Conversely, we assume that condition (2) is true. We shall show that $\widetilde{\mathcal{A}}_1,\dots,\ab \widetilde{\mathcal{A}}_n$ are free with respect to $\widetilde{E}$ and then invoke Proposition \ref{thm1}.  
Fix $n\geq 2$, suppose that 
$A_1,\dots, A_n$ are elements in $\widetilde{A}$ such that $A_j\in \widetilde{\mathcal{A}}_{i_j}$ where $i_1,\dots, i_s\in [n]$ are not all equal. 
Note that each $A_j$ is of the form
\[
A_j=
\begin{bmatrix}
a_j & a'_j \\
0 & a_j
\end{bmatrix} \ \mbox{for some}\ a_j\ \mbox{and}\ a_j'\in \mathcal{A}_j.
\] 
We shall show that the $\mathcal{B}$-valued cumulants of $\mathcal{A}_1,\dots, \mathcal{A}_n$ are zero. 
By our assumption, we have 
\begin{eqnarray}
&&\kappa^{\mathcal{B}}_s(a_1,\dots,a_s)=0\ ; \\
&&\partial\kappa^{\mathcal{B}}_s(a_1,\dots,a_s)=0\ ; \\ &&\kappa^{\mathcal{B}}_s(a_1,\dots,a_{j-1},a'_{j},a_{j+1},\dots,a_s)=0\ \mbox{for each}\ j=1,\dots, s.
\end{eqnarray}
Thus by \eqref{kappa1}, we obtain $\widetilde{\kappa}_s(A_1,\dots, A_s)=0$. Hence, we deduce that
 $\widetilde{\mathcal{A}}_1,\dots,\widetilde{\mathcal{A}}_n$ are free with respect to $\widetilde{E}$, and then by Proposition \ref{thm1}, we have that $\mathcal{A}_1,\dots, \mathcal{A}_n$ are infinitesimally free with respect to $(E,E').$   
\end{proof}

\subsection{Matrix Valued Infinitesimal Freeness}\label{23}
We recall that if $(\mathcal{A},\varphi)$ is an
non-commutative probability space, and let $N\in \mathbb{N}$, then the triple 
$(M_N(\mathcal{A}),M_N(\mathbb{C}),E)$ with $ E=Id_N\otimes \varphi$ is an operator-valued probability space. There is a nice relation between scalar-valued and matrix-valued freeness, which we sate as follows: unital subalgebras $(\mathcal{A}_i)_{i\in I}$ are free with respect to $\varphi$ if and only if $(M_N(\mathcal{A}_i))_{i\in I}$ are free with respect to $E$ (see \cite[Chapter 10]{mingo2017free}). 

In fact, this result can be generalized to the operator-valued setting as in the following proposition. 
\begin{prop}\label{M-valued free prop}
Suppose $(\mathcal{A},\mathcal{B},E)$ be an operator-valued probability space and $N\in\mathbb{N}$. We consider the matrix-valued operator-valued probability space $(M_N(\mathcal{A}),M_N(\mathcal{B}),E^{(N)})$ where
$E^{(N)}:=Id_{N}\otimes E$. Then
unital subalgebras $(\mathcal{A}_i)_{i\in I}$ are free with respect to $E$ if and only if $(M_N(\mathcal{A}_i))_{i\in I}$ are free with respect to $E^{(N)}$.      
\end{prop}
The proof of Proposition \ref{M-valued free prop} can be done immediately by applying the following Lemma and utilizing the property of vanishing mixed free cumulants.

\begin{lem}\label{Mat-lem}
Suppose $\{a_{i,j}^{(1)}\},\dots,\{a_{i,j}^{(n)}\}$ are sets of elements in $\mathcal{A}$ and $\{b_{i,j}^{(1)}\},\dots,\{b_{i,j}^{(n-1)}\}$ are sets of elements in $\mathcal{B}$. If we set $A_j=[a_{i,j}^{(j)}]$ and $B_k=[b_{i,j}^{(k)}]$ for $j\in [n]$ and $k\in [n-1]$, then for any $r,s\in [N]$ we have  
\begin{eqnarray*}
&&\big[\kappa^{(E_N)}_n(A_1B_1,\dots,A_{n-1}B_{n-1},A_n)\big]_{r,s} \\
&=& 
\sum_{\substack{i_1,\dots,i_{n-1} \\ j_1,\dots,j_{n-1}\in [N]}}
\kappa_{n}^{(E)}(a_{r,i_1}^{(1)}b_{i_1,j_1}^{(1)},a_{j_1,i_2}^{(2)}b_{i_2,j_2}^{(2)},\dots,a_{j_{n-2},i_{n-1}}^{(n-1)}b_{i_{n-1},j_{n-1}}^{(n-1)},a_{i_{n-1},s}^{(n)}).
\end{eqnarray*}
\end{lem}
\begin{proof}
Given $r,s\in [N]$, note that
\begin{eqnarray*}
&&\big[\kappa^{(E_N)}_n(A_1B_1,\dots,A_{n-1}B_{n-1},A_n)\big]_{r,s} \\ 
&=& \big[\sum_{\pi\in NC(n)}\mu(\pi,1)E^{(N)}_{\pi}(A_1B_1,\dots,A_{n-1}B_{n-1},A_n)\big]_{r,s} \\
&=&  \sum_{\pi\in NC(n)}\mu(\pi,1) 
\sum_{\substack{i_1,\dots,i_{n-1} \\ j_1,\dots,j_{n-1}\in [N]}} 
E_{\pi}(a_{r,i_1}^{(1)}b_{i_1,j_1}^{(1)},a_{j_1,i_2}^{(2)}b_{i_2,j_2}^{(2)},\dots,a_{j_{n-2},i_{n-1}}^{(n-1)}b_{i_{n-1},j_{n-1}}^{(n-1)},a_{i_{n-1},s}^{(n)}) \\
&=& 
\sum_{\substack{i_1,\dots,i_{n-1} \\ j_1,\dots,j_{n-1}\in [N]}}
\sum_{\pi\in NC(n)}\mu(\pi,1) E_{\pi}(a_{r,i_1}^{(1)}b_{i_1,j_1}^{(1)},a_{j_1,i_2}^{(2)}b_{i_2,j_2}^{(2)},\dots,a_{j_{n-2},i_{n-1}}^{(n-1)}b_{i_{n-1},j_{n-1}}^{(n-1)},a_{i_{n-1},s}^{(n)})\\
&=& 
\sum_{\substack{i_1,\dots,i_{n-1} \\ j_1,\dots,j_{n-1}\in [N]}}
\kappa_{n}^{(E)}(a_{r,i_1}^{(1)}b_{i_1,j_1}^{(1)},a_{j_1,i_2}^{(2)}b_{i_2,j_2}^{(2)},\dots,a_{j_{n-2},i_{n-1}}^{(n-1)}b_{i_{n-1},j_{n-1}}^{(n-1)},a_{i_{n-1},s}^{(n)}).
\end{eqnarray*}
\end{proof}

We will show that there is an analogous proposition in the realm of OVI freeness. First, we note that for a given OVI probability space $(\mathcal{A},\mathcal{B},E,E')$ and $N\in\mathbb{N}$, it is easily to see that the quadruple $(M_N(\mathcal{A}),M_N(\mathcal{B}),E^{(N)},E^{(N)'})$
is also an OVI probability space where $E^{(N)}:=Id_N\otimes E$ and $E^{(N)'}:=Id_N\otimes E'$. In addition, we note that our upper triangular structure has the following nice isomorphism property.  
\begin{rem}\label{M-rem}
Note that for an algebra $\mathcal{A}$ and $N\in \mathbb{N}$, we have $M_N(\widetilde{\mathcal{A}})\cong \widetilde{M_N(\mathcal{A})}$ via the map 
\begin{equation*}
\begin{bmatrix}
a_{11} & a_{11}' & a_{12} & a_{12}' & \cdots & a_{1N} & a_{1N}' \\
0 & a_{11} & 0 & a_{12} & \cdots & 0 & a_{1N} \\
\vdots & &&&&& \vdots \\
a_{N1} & a_{N1}' & a_{N2} & a_{N2}' & \cdots & a_{NN} & a_{NN}' \\
0 & a_{N1} & 0 & a_{N2} & \cdots & 0 & a_{NN}
\end{bmatrix} \longmapsto \begin{bmatrix}
a_{11} & \cdots & a_{1N} & a_{11}' & \cdots & a_{1N}' \\
\vdots &  & \vdots & \vdots & & \vdots \\ 
a_{N1} & \cdots & a_{NN} & a_{N1}' & \cdots & a_{NN}' \\
0 & \cdots & 0 & a_{11} & \cdots & a_{1N} \\
\vdots &  & \vdots & \vdots & & \vdots \\ 
0 & \cdots & 0 & a_{N1} & \cdots & a_{NN}
\end{bmatrix}.
\end{equation*}
\end{rem}
\begin{prop}\label{thm:matrix_equivalent}
Consider an OVI probability space $(\mathcal{A},\mathcal{B},E,E')$ and $N\in\mathbb{N}$. Unital subalgebras $(\mathcal{A}_i)_{i\in I}$ are infinitesimally free with respect to $(E,E')$ if and only if $(M_N(\mathcal{A}_i))_{i\in I}$ are infinitesimally free with respect to $(E^{(N)},E^{(N)'})$. 
\end{prop}
\begin{proof}
Let $(\mathcal{A}_i)_{i\in I}$ be unital subalgebras of $\mathcal{A}$. Then 
\begin{eqnarray*}
&&(\mathcal{A}_i)_{i\in I} \text{ are infinitesimally with respect to }(E,E') \\ &\xLeftrightarrow{\textrm{Proposition }\ref{thm1}} &
(\widetilde{\mathcal{A}}_i)_{i\in I} \text{ are free with respect to }\widetilde{E} \\
&\xLeftrightarrow{\textrm{Proposition }\ref{M-valued free prop}} & 
(M_N(\widetilde{\mathcal{A}}_i))_{i\in I}\text{ are free with respect to }E^{(N)} \\
&\xLeftrightarrow{\textrm{Remark }\ref{M-rem}} & 
(\widetilde{M_N(\mathcal{A}_i)})_{i\in I}\text{ are free with respect to }E^{(N)} \\
&\xLeftrightarrow{\textrm{Proposition }\ref{thm1}} & 
(M_N(\mathcal{A}_i))_{i\in I}\text{ are infinitesimally free with respect to }(E^{(N)},E^{(N)'}). 
\end{eqnarray*}
\end{proof}
\begin{rem}
When $\mathcal{B}=\mathbb{C}$, Proposition \ref{thm:matrix_equivalent} shows that unital subalgebras $(\mathcal{A}_i)_{i\in I}$ are infinitesimally free in $(\mathcal{A},\varphi,\varphi')$ if and only if $(M_N(\mathcal{A}_i))_{i\in I}$ are infinitesimally free in $(M_N(\mathcal{A}),M_N(\mathbb{C}),Id_N\otimes \varphi,Id_N\otimes\varphi')$.
\end{rem}

\section{Operator-Valued Infinitesimal Free Convolutions } \label{3}

We will construct the infinitesimal free additive and multiplicative convolutions in this subsection. To do so, we will introduce the concepts of OVI Cauchy transform and OVI $S$-transform, and then demonstrate how to express the OVI additive (respectively multiplicative) convolutions in terms of OVI Cauchy transforms (respectively $S$-transforms).

\subsection{\texorpdfstring{$C^*$}{C*}-OVI Probability} 

In this subsection, we consider the analytic setting of OVI probability space $(\mathcal{A},\mathcal{B},E,E')$. Moreover,  
the notion of OVI Cauchy transform will be also introduced in this subsection.   

\begin{defn}\label{COVIspace}
$(\mathcal{A},\mathcal{B},E,E')$ is called a \emph{$C^*$-OVI probability space} if $(\mathcal{A},\mathcal{B},E)$ is a $C^*$-operator-valued probability space and $E':\mathcal{A}\rightarrow \mathcal{B}$ is a linear, $\mathcal{B}$-$\mathcal{B}$ bimodule, selfadjoint map that is bounded with $E'(1)=0$.   
\end{defn}

\begin{rem}\label{CIDRmk}
Following Definition \ref{COVIspace}, we also obtain scalar version of $C^*$-infinitesimal probability spaces if we set  $\mathcal{B}=\mathbb{C}$. More precisely,  $(\mathcal{A},\varphi,\varphi')$ is called a \emph{$C^*$-infinitesimal probability space} if $(\mathcal{A},\varphi)$ is a $C^*$-probability space and $\varphi':\mathcal{A}\rightarrow \mathbb{C}$ is a selfadjoint bounded linear functional with $\varphi'(1)=0$.  

Asymptotic infinitesimal distributions may be unbounded linear functionals; however, many asymp\-totic infinitesimal distributions are signed measures with compact support and bounded variation. These cases are covered by our Definition \ref{COVIspace}. For instance, the limit infinitesimal distributions of Gaussian Orthogonal Ensemble \cite{joh98} and complex Wishart matrices \cite{M} are such signed measures.
\end{rem}

Now, let us define the infinitesimal Cauchy transform of a given element. First, we state it as a formal series: 
for a given OVI probability space $(\mathcal{A},\mathcal{B},E,E')$ and given $x\in\mathcal{A}$, \emph{the infinitesimal Cauchy transform of $x$ at $b$} is given by 
$$
g_x(b):=E'((b-x)^{-1})=\sum\limits_{n\geq 0}E'(b^{-1}(xb^{-1})^n)
$$
whenever each formula makes sense. 

Note that if we consider matrix amplifications $(g_x^{(m)})_{m=1}^\infty$ of $g_x$, then $(g_x^{(m)})_{m=1}^\infty$ encodes all possible infinitesimal moments of $x$ where 
$$
g_x^{(m)}(b) := E'\otimes 1_m((b-x\otimes 1_m)^{-1})= \sum\limits_{n\geq 0}E'\otimes 1_m (b^{-1}(x\otimes 1_mb^{-1})^n)
$$
for each $m\in\mathbb{N}$ and $b\in M_m(\mathcal{B})$. The effectiveness of amalgamation over upper triangular matrices can be understood by observing that the resolvent is a non-commutative function (see \cite{kaliuzhnyi2014foundations}).

Now, let us assume that $(\mathcal{A},\mathcal{B},E,E')$ is a $C^*$-OVI probability space, and we define $g_x$ as follows. 
\begin{defn}
Suppose that $(\mathcal{A},\mathcal{B},E,E')$ is a $C^*$-OVI probability space 
and $x=x^*$ be an element in $\mathcal{A}$, the \emph{infinitesimal Cauchy transform} of $x$ is defined by
\[
g_x(b)=E'((b-x)^{-1}),\ \mbox{for all}\ b\in\mathcal{B}\ \mbox{with}\ b-x\ \mbox{is invertible.}
\] 
\end{defn}
It is clear that $g_x$ is well-defined on $H^+(\mathcal{B})$. Note that $g_x$ is analytic on $H^+(\mathcal{B})$ and for a given $b\in H^+(\mathcal{B})$, the Fr\'{e}chet derivative of $x$ at $b$ is given by
$$
g_x'(b)(\cdot)=-E'\left((b-x)^{-1}\cdot (b-x)^{-1}\right).
$$ 
Indeed, since $E'$ is bounded, there is $C>0$ so that 
$$
\|E'(a)\|\leq C\|a\| \text{ for }a\in\mathcal{A}.
$$
Thus, if we let 
$$
A_x(b)(\cdot)=-E'\left((b-x)^{-1}\cdot (b-x)^{-1}\right),
$$
then for $\|h\|$ small, we have
\begin{eqnarray*}
&&\|g_x(b+h)-g_x(b) -A_x(b)(h) \| \\
&=&\|E'\left((b+h-x)^{-1}\right)-E'\left( (b-x)^{-1}\right)-A_x(b)(h)\| \\
&=& \|E'\left( (b+h-x)^{-1}(b-x)(b-x)^{-1}\right) \\ &&-E'\left((b+h-x)^{-1}(b+h-x)(b-x)^{-1}\right)-A_x(b)(h) \| \\
&=& \| E'\left( (b+h-x)^{-1}(-h)(b-x)^{-1}\right)-A_x(b)(h)\| \\
&=& \|E'\left( [(b+h-x)^{-1}-(b-x)^{-1}]h(b-x)^{-1} \right) \|.
\end{eqnarray*}
Therefore,
\begin{eqnarray*}
&&\frac{ \|g_x(b+h)-g_x(b) -A_x(b)(h) \|}{\|h\|} \\
&=&\frac{\|E'\left( [(b+h-x)^{-1}-(b-x)^{-1}]h(b-x)^{-1} \right)\|}{\|h\|}  \\
&\leq& \frac{C\|[(b+h-x)^{-1}-(b-x)^{-1}]h(b-x)^{-1}\|}{\|h\|}  \\
&\leq& \frac{\|(b+h-x)^{-1}-(b-x)^{-1}\|\cdot\|h\|\cdot\|(b-x)^{-1}\|}{\|h\|}  \\
&=& \|(b+h-x)^{-1}-(b-x)^{-1}\|\cdot \|(b-x)^{-1}\| \longrightarrow 0 \text{ as }h\longrightarrow 0.
\end{eqnarray*}

If we further assume $\|b^{-1}\|<1/\|x\|$, then
$$
g_x(b)=E'(\sum\limits_{n\geq 0} b^{-1}(xb^{-1})^n)=\sum\limits_{n\geq 0} E'(b^{-1}(xb^{-1})^n),
$$
which coincides with $g_x$ as a formal series. 

\begin{rem}
Note that for a given $C^*$-OVI probability space $(\mathcal{A},\mathcal{B},E, \ab E')$ and $(\widetilde{\mathcal{A}},\widetilde{\mathcal{B}},\widetilde{E})$ be its upper triangular probability space. Then $(\widetilde{\mathcal{A}},\widetilde{\mathcal{B}},\widetilde{E})$ is an operator-valued Banach non-commutative probability space with the following norm on $\widetilde{\mathcal{A}}$: 
$$
\|A\|_{\widetilde{\mathcal{A}}}:=\|a\|+\|a'\| 
\text{ where } A=\begin{bmatrix}
 a & a' \\ 
 0 & a
\end{bmatrix}\in\widetilde{\mathcal{A}}.
$$
\end{rem}

\subsection{OVI Additive Convolution} 

For a $C^*$-OVI probability space $(\mathcal{A},\mathcal{B},E,E')$ and $x=x^*\in\mathcal{A}$, we consider its corresponding upper triangular probability space $(\widetilde{\mathcal{A}},\widetilde{\mathcal{B}},\widetilde{E})$. If we let $X=\begin{bmatrix}
x & 0 \\
0 & x
\end{bmatrix}\in\widetilde{\mathcal{A}}$ and $\widetilde{b}=\begin{bmatrix}
b & c \\
0 & b
\end{bmatrix}\in\widetilde{\mathcal{B}}$ where $b\in H^+(\mathcal{B})$ and $c\in\mathcal{B}$, then it is easy to see that 
\begin{eqnarray*}
G_{X}\left(\begin{bmatrix} b & c \\ 0 & b \end{bmatrix}\right)&=& 
\widetilde{E}\left( \Bigg\{
\begin{bmatrix}
 b & c \\ 0 & b
\end{bmatrix} -\begin{bmatrix}
x & 0 \\ 0 & x
\end{bmatrix} \Bigg\}^{-1}\right)   \\
&=& 
\widetilde{E}\left(\begin{bmatrix} (b-x)^{-1} & -(b-x)^{-1}c(b-x)^{-1} \\ 0 & (b-x)^{-1} \end{bmatrix}\right) \\
&=&\begin{bmatrix} E\left((b-x)^{-1}\right) & E'\left((b-x)^{-1}\right)-E\left((b-x)^{-1}c(b-x)^{-1}\right) \\ 0 & E\left((b-x)^{-1}\right) \end{bmatrix} \\
&=& \begin{bmatrix}
G_x(b) & G_x'(b)(c)+g_x(b) \\
0  & G_x(b)
\end{bmatrix}.
\end{eqnarray*}
Moreover, the $R$-transform of $X$ can be characterized as follows.
\begin{lem} 
There is $r>0$ so that for all $w\in B(0,r)$ and $v\in\mathcal{B}$, we have
$$
R_X\begin{bmatrix}
w & v \\
0 & w
\end{bmatrix}=
\begin{bmatrix}
R_x(w) & R_x'(w)(v)+r_x(w) \\
0 & R_x(w)
\end{bmatrix}
$$
where 
$$
r_x(w)= -(G_x^{\langle -1\rangle})'(w)\left(g_x(G_x^{\langle -1\rangle}(w))\right).
$$
\end{lem}
\begin{proof}

Note that there is $r>0$ and an open set $V_x\subset H^-(\mathcal{B})\cap B(0,r)$ with $0\in\overline{V_x}$ so that $G^{\langle -1 \rangle}_x$ is defined on $V_x$. 

Given $w\in V_x$ and $v\in\mathcal{B}$. Let us invert 
$\begin{bmatrix} w & v \\ 0 & w \end{bmatrix}=G_{X}\left(\begin{bmatrix} b & c \\ 0 & b \end{bmatrix}\right)$, which is equivalent to
$$
G_x(b)=w \text{ and } g_x(b)+G'_x(b)(c)=v.
$$
Thus, we have $b=G_x^{\langle -1\rangle}(w)$ and 
\begin{equation}\label{inveg}
g_x(b)+G'_x(b)(c)=v \Longrightarrow G_x'(b)(c)=v-g_x(b).
\end{equation}
Observe that 
\[
w=G_x(G_x^{\langle-1\rangle}(w)) \implies\text{Id}=G_x'(G_x^{\langle-1\rangle}(w))\circ
(G_x^{\langle-1\rangle})'(w). 
\]
Since $b=G_x^{\langle-1\rangle}(w)$, we get 
\[
G'_x(b)(c)=G'_x(G_x^{\langle-1\rangle}(w))(c)
=\left[(G_x^{\langle-1\rangle})'(w)\right]^{\langle-1\rangle}(c).
\]
Then \eqref{inveg} implies that 
\begin{eqnarray*}
&&\left[(G_x^{\langle-1\rangle})'(w)\right]^{\langle-1\rangle}(c) = v-g_x(G_x^{\langle-1\rangle}(w)) \\
&\Longrightarrow&  c = (G_x^{\langle-1\rangle})'(w)\left(v-g_x(G_x^{\langle-1\rangle}(w))\right).
\end{eqnarray*}
Hence, we obtain
$$
G_{X}^{\langle-1\rangle}\left(\begin{bmatrix} w & v \\ 0 & w 
\end{bmatrix}\right) =
\begin{bmatrix}
G_x^{\langle-1\rangle}(w) & (G_x^{\langle-1\rangle})'(w)\left(v-g_x(G_x^{\langle-1\rangle}(w))\right) \\
0 & G_x^{\langle-1\rangle}(w)
\end{bmatrix} 
$$
and also 
\begin{eqnarray*}
R_{X}\left(\begin{bmatrix} w & v \\ 0 & w \end{bmatrix}\right)&=&G_{X}^{\langle-1\rangle}\left(\begin{bmatrix} w & v \\ 0 & w 
\end{bmatrix}\right)-\begin{bmatrix} w & v \\ 0 & w \end{bmatrix}^{-1} \\
&=& G_{X}^{\langle-1\rangle}\left(\begin{bmatrix} w & v \\ 0 & w 
\end{bmatrix}\right)-
\begin{bmatrix} w^{-1} & -w^{-1}vw^{-1} \\ 
0 & w^{-1} 
\end{bmatrix} \\
&=& 
\begin{bmatrix}
G_x^{\langle-1\rangle}(w) -w^{-1} & (G_x^{\langle-1\rangle})'(w)\left(v-g_x(G_x^{\langle-1\rangle}(w))\right)+w^{-1}vw^{-1}\\
0 & G_x^{\langle-1\rangle}(w)-w^{-1}
\end{bmatrix}. \\
\end{eqnarray*}
By definition, $(G_x^{\langle-1\rangle})'(w)(\cdot)=-w^{-1}\cdot w^{-1}+R_x'(w)(\cdot)$ and therefore we conclude
\begin{eqnarray*}
&&R_{X}\left(\begin{bmatrix} w & v \\ 0 & w \end{bmatrix}\right) \\
&=& \begin{bmatrix} R_x(w) & R_x'(w)(v)-(G_x^{\langle -1\rangle})'(w)\left(g_x(G_x^{\langle -1\rangle}(w))\right) \\ 
0 & R_x(w) \end{bmatrix}
\end{eqnarray*}
where $w\in V_x$ and $v\in\mathcal{B}$. 

Since $R_X$ is well-defined and analytic on a neighborhood of the origin, the domain of $r_x$ above can be extended to a neighborhood of the origin and then we complete the proof. 
\end{proof}

\begin{rem}
We can see the formula of $r_x$ in previous Lemma provide us a connection between $r_x$ and $g_x$. We say the map $r_x$ the \emph{infinitesimal $R$-transform of $x$}. The scalar version of this result was found (see \cite{M}) and if we consider $\mathcal{B}=\mathbb{C}$ then it is easy to see that the formula coincides with the one in the scalar version. 
\end{rem}

Now, we state and prove our main theorem as follows.
\begin{thm} \label{thm4}
Given an $C^*$-OVI probability space $(\mathcal{A},\mathcal{B},E,E')$. If $x$ and $y$ are two self-adjoint elements in $\mathcal{A}$ that are infinitesimally free with respect to $(E,E')$, then for $b\in H^+(\mathcal{B})$ we have
\begin{eqnarray*}\lefteqn{
g_{x+y}(b) =  \left[G_{x+y}'(b)\circ\omega_{1}'(b)\circ  (G_{x+y}'(b))^{\langle-1\rangle}\right](g_{x}(\omega_1(b)))}
\\ & \qquad\qquad\qquad\qquad\mbox{} +
\left[G_{x+y}'(b)\circ\omega_{2}'(b)\circ (G_{x+y}'(b))^{\langle-1\rangle}\right](g_{y}(\omega_2(b)))
\end{eqnarray*}
where $\omega_1,\omega_2$ the subordination functions defined in \cite{v2000}. 
\end{thm}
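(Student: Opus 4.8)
The plan is to lift the whole problem to the upper triangular probability space $(\widetilde{\mathcal A},\widetilde{\mathcal B},\widetilde{\mathbb E})$ of Remark \ref{rem1}, where infinitesimal freeness becomes honest $\mathcal B$-freeness, and then to read off $g_{x+y}$ from the off-diagonal entry of the matricial Cauchy transform. Using the dual-number description $\widetilde{\mathcal B}=\mathcal B+t\mathcal B$ with $t^2=0$, the first step is the basic computation: for a self-adjoint $x$ lifted to $\tilde x=\left[\begin{smallmatrix} x&0\\0&x\end{smallmatrix}\right]$ and $\tilde b=b+tb'$, inverting the triangular matrix $\tilde b-\tilde x$ and applying $\widetilde{\mathbb E}$ gives
\[
\widetilde G_{\tilde x}(\tilde b)=G_x(b)+t\bigl(G_x'(b)[b']+g_x(b)\bigr),
\]
so the scalar part of $\widetilde G_{\tilde x}$ is $G_x$ and its $t$-part packages the Fr\'echet derivative of $G_x$ together with the infinitesimal Cauchy transform $g_x$. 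By Proposition \ref{thm1}, $\tilde x$ and $\tilde y$ are free over $\widetilde{\mathcal B}$, so I intend to feed them into the operator-valued subordination theorem of Belinschi, Mai and Speicher \cite{BMS} to obtain subordination functions $\widetilde\omega_1,\widetilde\omega_2$ satisfying $\widetilde G_{\tilde x}(\widetilde\omega_1(\tilde b))=\widetilde G_{\tilde y}(\widetilde\omega_2(\tilde b))=\widetilde G_{\tilde x+\tilde y}(\tilde b)$ and $\widetilde\omega_1+\widetilde\omega_2=\tilde b+F_{\tilde x+\tilde y}(\tilde b)$.

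Next I would split these relations into their scalar and $t$-parts. Since infinitesimal freeness entails ordinary freeness with respect to $\mathbb E$, the scalar parts reproduce the genuine $\mathcal B$-valued subordination relations for $x,y$, so by uniqueness the scalar part of $\widetilde\omega_j$ is the subordination function $\omega_j$ of the statement. Writing $\widetilde\omega_j(\tilde b)=\omega_j(b)+t\Omega_j$, the dual-number calculus forces $\Omega_j=\omega_j'(b)[b']+\gamma_j$, where $\gamma_j\in\mathcal B$ is the value of the $t$-part at $b'=0$. Extracting the $t$-part of $\widetilde G_{\tilde x}(\widetilde\omega_1(\tilde b))=\widetilde G_{\tilde x+\tilde y}(\tilde b)$ and using the chain rule $G_{x+y}'(b)=G_x'(\omega_1(b))\circ\omega_1'(b)$ makes the derivative terms cancel, leaving
\[
g_{x+y}(b)=G_x'(\omega_1(b))[\gamma_1]+g_x(\omega_1(b)),\qquad g_{x+y}(b)=G_y'(\omega_2(b))[\gamma_2]+g_y(\omega_2(b)).
\]
The $t$-part of $\widetilde\omega_1+\widetilde\omega_2=\tilde b+F_{\tilde x+\tilde y}(\tilde b)$ supplies the third relation $\gamma_1+\gamma_2=-F_{x+y}(b)\,g_{x+y}(b)\,F_{x+y}(b)$.

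I would then solve this linear system for $g_{x+y}(b)$. Abbreviating $G'=G_{x+y}'(b)$, $H=(G')^{\langle-1\rangle}$, $W_j=\omega_j'(b)$, the chain rule gives $G_x'(\omega_1)^{-1}=W_1H$ and $G_y'(\omega_2)^{-1}=W_2H$, so the first two relations read $\gamma_1=W_1H(g_{x+y}-g_x(\omega_1))$ and $\gamma_2=W_2H(g_{x+y}-g_y(\omega_2))$. Substituting into the third relation and using $W_1+W_2=\mathrm{id}+F_{x+y}'(b)$ together with $F_{x+y}'(b)=R\circ G'$, where $R(c)=-F_{x+y}(b)\,c\,F_{x+y}(b)$, yields the identity $(W_1+W_2)H=H+R$, which collapses the equation to $H\,g_{x+y}(b)=W_1H\,g_x(\omega_1)+W_2H\,g_y(\omega_2)$; applying $G'$ gives exactly the asserted formula.

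The main obstacle is the very first analytic step. Because $\widetilde{\mathcal B}$ is not self-adjoint, the theorem of \cite{BMS} does not apply verbatim, and I must justify the existence and analyticity of $\widetilde\omega_1,\widetilde\omega_2$ in this infinitesimal setting. I would do this by running the fixed-point iteration $f_{\tilde b}^{\circ n}$ of the subordination theorem inside $\widetilde{\mathcal B}$: its scalar part is the genuine contraction of the $C^*$-space $(\mathcal A,\mathcal B,\mathbb E)$ and hence converges to $\omega_j$, while its $t$-part obeys a linear recursion that converges because the linearization of the scalar map at its fixed point is a strict contraction. Once existence is secured, everything downstream is routine dual-number bookkeeping.
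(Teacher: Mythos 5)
Your reduction to the upper-triangular space, the computation $\widetilde G_{\tilde x}(\tilde b)=G_x(b)+t\bigl(G_x'(b)[b']+g_x(b)\bigr)$, and the closing linear algebra (the relations $\gamma_j=W_jH(g_{x+y}-\cdot)$, the identity $(W_1+W_2)H=H+R$, and the resulting formula) are all correct; indeed that last computation is a cleaner, subordination-language version of what the paper does with $R$-transforms. The gap is in the step you yourself flag as the main obstacle, and your proposed fix does not close it. Running the fixed-point iteration $f_{\tilde b}^{\circ n}$ inside $\widetilde{\mathcal B}$ (scalar part converging by \cite{BMS}, $t$-part by a linear recursion --- that part of your argument is fine, since Earle--Hamilton makes the derivative of the scalar map at its fixed point a strict contraction in an equivalent metric) produces analytic maps $\widetilde\omega_1,\widetilde\omega_2$ satisfying the fixed-point equation, equivalently $F_{\tilde x}(\widetilde\omega_1(\tilde b))=F_{\tilde y}(\widetilde\omega_2(\tilde b))=\widetilde\omega_1(\tilde b)+\widetilde\omega_2(\tilde b)-\tilde b$. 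But it does \emph{not} produce the identity on which your entire downstream computation rests, namely $\widetilde G_{\tilde x}(\widetilde\omega_1(\tilde b))=\widetilde G_{\tilde x+\tilde y}(\tilde b)$ (relation (3) of the BMS theorem). That relation is the only place where freeness enters: the fixed-point construction and relations (1)--(2) make sense and hold for an \emph{arbitrary} pair of self-adjoint elements, for which (3) is simply false. In the $C^*$-setting, (3) is proved using positivity (complete positivity of $\mathbb{E}$, conditional expectations onto $W^*$-subalgebras, as in \cite{V2000}), which is exactly what is unavailable over the non-selfadjoint algebra $\widetilde{\mathcal B}$. Consequently your two equations $g_{x+y}(b)=G_x'(\omega_1(b))[\gamma_1]+g_x(\omega_1(b))$ and $g_{x+y}(b)=G_y'(\omega_2(b))[\gamma_2]+g_y(\omega_2(b))$ are unproven, and they cannot be recovered from the scalar subordination relations, since those never involve $g_x$, $g_y$, $g_{x+y}$.

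To close the gap you would need a freeness argument for (3) over $\widetilde{\mathcal B}$ that avoids positivity, for instance: cumulant additivity (a combinatorial consequence of freeness, valid over the Banach algebra $\widetilde{\mathcal B}$) gives $R_{\tilde x+\tilde y}=R_{\tilde x}+R_{\tilde y}$ as convergent series on $H^-(\widetilde{\mathcal B})\cap B(0,r)$; this yields (3) for $\mathrm{Im}(b)$ large, and one then extends to all of $H^+(\mathcal B)$ by analytic continuation, using the analyticity of the fixed-point functions and the convexity of the domain. Note that this is essentially how the paper proceeds, except that it sidesteps subordination over $\widetilde{\mathcal B}$ entirely: it works with $R_X$, $R_Y$, $R_{X+Y}$ over $\widetilde{\mathcal B}$ near $0$, where additivity follows from Proposition \ref{thm1}, extracts the off-diagonal entry of the resulting identity, and only at the very end substitutes $w=G_{x+y}(b)$ so that the \emph{scalar} $C^*$-subordination functions $\omega_1,\omega_2$ of $(\mathcal A,\mathcal B,\mathbb E)$ --- for which positivity is available --- appear. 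So the missing identity is not ``routine dual-number bookkeeping''; it is the heart of the proof, and supplying it forces you back to an $R$-transform argument of the paper's type.
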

\begin{proof}
Suppose that $x=x^*$ and $y=y^*$ are infinitesimally free with respect to $(E,E')$.
By Proposition \ref{thm1}, we have $X$ and $Y$ are free with respect to $\widetilde{E}$ where
$$
X=\begin{bmatrix}
x & 0 \\
0 & x
\end{bmatrix} \text{ and } Y=\begin{bmatrix}
y & 0 \\
0 & y
\end{bmatrix}. 
$$
Now, since $X$ and $Y$ are free with respect to $\widetilde{E}$, we have 
\begin{equation}\label{UR+formula}
R_X\Bigg(\begin{bmatrix}
w & v \\
0 & w
\end{bmatrix}\Bigg)+R_Y\Bigg(\begin{bmatrix}
w & v \\
0 & w
\end{bmatrix}\Bigg)=R_{X+Y}\Bigg(\begin{bmatrix}
w & v \\
0 & w
\end{bmatrix}\Bigg),
\end{equation}
for $v\in\mathcal{B}$ and $w\in B(0,r)$ for some $r>0$, which implies
$$
r_x(w)+r_y(w)=r_{x+y}(w).
$$
Thus, 
\begin{eqnarray*}
&&(G_x^{\langle -1\rangle})'(w)\left(g_x(G_x^{\langle -1\rangle}(w))\right) +(G_y^{\langle -1\rangle})'(w)\left(g_y(G_y^{\langle -1\rangle}(w))\right) \\
&=&(G_{x+y}^{\langle -1\rangle})'(w)\left(g_{x+y}(G_{x+y}^{\langle -1\rangle}(w))\right).
\end{eqnarray*}
Observe that 
$$
w=G_{x+y}(b)=G_x(\omega_1(b)) \Longrightarrow G_x^{\langle -1\rangle}(G_{x+y}(b))=\omega_1(b) 
$$
and hence 
\begin{eqnarray*}
&&(G_x^{\langle -1\rangle})'(G_{x+y}(b))\circ G_x'(\omega_1(b))\circ \omega_1'(b) (\cdot) = \omega_1'(b)(\cdot) \\
&\Rightarrow& (G_x^{\langle -1\rangle})'(G_{x+y}(b))\circ G_x'(\omega_1(b)) (\cdot) = Id \\
&\Rightarrow& (G_x'(\omega_1(b)))^{\langle -1\rangle} (\cdot) = (G_x^{\langle -1\rangle})'(G_{x+y}(b)) (\cdot).  
\end{eqnarray*}
Similarly, we have 
$$
(G_y'(\omega_2(b)))^{\langle -1\rangle} (\cdot) = (G_y^{\langle -1\rangle})'(G_{x+y}(b)) (\cdot).
$$
Therefore, we obtain
\begin{eqnarray*}
&&(G'_{x+y}(b))^{\langle -1\rangle}\left(g_{x+y}(b)\right) \\
&=&(G_x'(\omega_1(b)))^{\langle -1\rangle}\left(g_x(\omega_1(b))\right)+(G_y'(\omega_2(b)))^{\langle -1\rangle}\left(g_y(\omega_2(b))\right). 
\end{eqnarray*}
Now, we note that 
\begin{eqnarray*}
G_{x+y}(b)=G_x(\omega_1(b)) &\Longrightarrow & G_{x+y}'(b)(\cdot) = G_x'(\omega_1(b))\circ \omega'_1(b)(\cdot) \\
&\Longrightarrow & G_x'(\omega_1(b))\circ \omega'_1(b) \circ (G_{x+y}'(b))^{\langle -1\rangle} = Id \\
&\Longrightarrow& \omega'_1(b) \circ (G_{x+y}'(b))^{\langle -1\rangle} = (G_x'(\omega_1(b)))^{\langle -1\rangle}.
\end{eqnarray*}
Similarly, 
$$
\omega'_2(b) \circ (G_{x+y}'(b))^{\langle -1\rangle} = (G_y'(\omega_2(b)))^{\langle -1\rangle}.
$$
Therefore, we conclude that
\begin{eqnarray*}
g_{x+y}(b)& = & \left[G_{x+y}'(b)\circ\omega_{1}'(b)\circ  (G_{x+y}'(b))^{\langle-1\rangle}\right](g_{x}(\omega_1(b))) \\
& & \mbox{}+\left[G_{x+y}'(b)\circ\omega_{2}'(b)\circ (G_{x+y}'(b))^{\langle-1\rangle}\right](g_{y}(\omega_2(b))).
\end{eqnarray*}
\end{proof}

\subsection{OVI Multiplicative Convolution}

Suppose that $(\mathcal{A},\mathcal{B},E,E')$ is a $C^*$-OVI probability space. Let $x=x^*$ be an element in $\mathcal{A}$. If we let $X=\begin{bmatrix}
x & 0 \\ 0 & x 
\end{bmatrix}$, then for any $b,c\in\mathcal{B}$ with $$\left\|\begin{bmatrix}
b & c \\ 0 & b
\end{bmatrix}\right\|:=\|b\|+\|c\|<\|x\|^{-1},$$ we have
\begin{eqnarray*}
&& \Psi_{X}\left(\begin{bmatrix}
b & c \\
0 & b
\end{bmatrix}\right)\\ 
&=& 
\widetilde{E}\left(\left(\begin{bmatrix}
1 & 0 \\ 0 & 1
\end{bmatrix}-\begin{bmatrix}
b & c \\ 0 & b
\end{bmatrix}\begin{bmatrix}
x & 0 \\ 0 & x
\end{bmatrix}\right)^{-1}-\begin{bmatrix}
1 & 0 \\ 0 & 1
\end{bmatrix}\right) \\
&=& 
\widetilde{E}\left(\begin{bmatrix}
(1-bx)^{-1}-1 & (1-bx)^{-1}cx(1-bx)^{-1} \\ 0 & (1-bx)^{-1}-1
\end{bmatrix}\right) \\
&=& 
\begin{bmatrix}
E\left((1-bx)^{-1}-1\right) & E'((1-bx)^{-1}-1)+E((1-bx)^{-1}cx(1-bx)^{-1}) \\
0 & E((1-bx)^{-1}-1)
\end{bmatrix} \\
&=&
\begin{bmatrix}
\psi_x(b) & E'((1-bx)^{-1}-1)+\psi'_x(b)(c) \\
0 & \psi_x(b)
\end{bmatrix}.
\end{eqnarray*}
Note that 
$$
E'((1-bx)^{-1}-1)=\sum\limits_{n\geq 1} E'((bx)^n)
$$
is the infinitesimal moment-generating function of $x$, and we denote it by $\partial \psi_x$.  

Thus, we have 
$$
\Psi_{X}\left(\begin{bmatrix}
b & c \\
0 & b
\end{bmatrix}\right) = \begin{bmatrix}
\psi_x(b) & \psi_x'(b)(c)+\partial \psi_x(b) \\
0 & \psi_x(b)
\end{bmatrix}. 
$$
Suppose $(\mathcal{A},\mathcal{B},E,E')$ is a $C^*$- OVI probability space and $x=x^*\in \mathcal{A}$.  
If we assume that $E(x)$ is invertible, then the $S$-transform of $X$ can be described as the following Lemma.  
\begin{lem}\label{OVIM lemma}
For $w,v\in\mathcal{B}$ are small, we have 
$$
S_X\left(\begin{bmatrix}
w & v \\
0 & w
\end{bmatrix} \right) =\begin{bmatrix}
S_x(w) & S_x'(w)(v)+\partial S_x(w) \\
0 & S_x(w)
\end{bmatrix}
$$
where 
$$
\partial S_x(w)=-(\psi_{x}^{\langle -1\rangle})'(w)\left(\partial\psi_x(\psi_x^{\langle -1 \rangle}(w))\right).
$$
\end{lem}
\begin{proof}
Note that $\Psi_X^{\langle -1\rangle}$ is well-defined in an neighborhood of the origin, and let $w$ and $v$ in $\mathcal{B}$ be small so that 
$
\begin{bmatrix}
w & v \\ 0 & w
\end{bmatrix}
$ in such neighborhood. Then we consider 
$$
\Psi_X\left(\begin{bmatrix}
b & c \\
0 & b
\end{bmatrix}\right) = \begin{bmatrix}
w & v \\
0 & w
\end{bmatrix} \Longrightarrow 
\begin{bmatrix}
\psi_x(b) & \psi_x'(b)(c)+\partial \psi_x(b) \\
0 & \psi_x(b)
\end{bmatrix} = \begin{bmatrix}
w & v \\
0 & w
\end{bmatrix}. 
$$
Then we have 
$$
w=\psi_x(b) \Longrightarrow \psi^{\langle -1\rangle}_x(w)=b. 
$$
In addition, since 
$$
\psi_x(\psi_x^{\langle -1\rangle}(w))=w \Longrightarrow \psi_x'(\psi_x^{\langle -1\rangle}(w))\circ (\psi_x^{\langle -1\rangle})'(w) = Id, 
$$
we have 
$$
\psi_x'(b)(c) = [(\psi_{x}^{\langle -1\rangle})'(w)]^{\langle -1\rangle}(c). 
$$
Thus, 
\begin{eqnarray*}
v = \psi_x'(b)(c)+\partial \psi_x(b) &\Longrightarrow & \psi_x'(b)(c) =v-\partial \psi_x(b) \\
&\Longrightarrow & [(\psi_{x}^{\langle -1\rangle})'(w)]^{\langle -1\rangle}(c) = v-\partial \psi_x(b)  \\
&\Longrightarrow & c = (\psi_{x}^{\langle -1\rangle})'(w)(v-\partial\psi_x(b) ) \\
&\Longrightarrow & c = (\psi_{x}^{\langle -1\rangle})'(w)\left(v-\partial\psi_x(\psi_x^{\langle -1 \rangle}(w)) \right).
\end{eqnarray*}
Therefore, we have 
$$
\Psi_X^{\langle -1\rangle} \left(\begin{bmatrix}
w & v \\
0 & w
\end{bmatrix}\right) = \begin{bmatrix}
\psi_x^{\langle -1\rangle}(w) &  (\psi_{x}^{\langle -1\rangle})'(w)\left(v-\partial\psi_x(\psi_x^{\langle -1 \rangle}(w)) \right) \\
0 & \psi_x^{\langle -1\rangle}(w) 
\end{bmatrix},
$$
and then
$$
S_X\left(\begin{bmatrix}
w & v \\
0 & w
\end{bmatrix} \right) 
= \begin{bmatrix}
w^{-1}+1 & -w^{-1}vw^{-1} \\
0 & w^{-1}+1 \end{bmatrix} \Psi_X^{\langle -1\rangle} \left(\begin{bmatrix}
w & v \\
0 & w
\end{bmatrix}\right) 
= 
\begin{bmatrix}
S_x(w) & C_x(w,v)\\
0 & S_x(w)
\end{bmatrix} 
$$
where
\begin{eqnarray*}
&&C_x(w,v) \\
&=&(w^{-1}+1)(\psi_x^{\langle -1\rangle})'(w)(v)+(w^{-1}+1)'(v)\psi^{\langle -1\rangle}_x(w)-(\psi_{x}^{\langle -1\rangle})'(w)\left(\partial \psi_x(\psi_x^{\langle-1\rangle}(w))\right) \\
&=&
S_x'(w)(v)-(\psi_{x}^{\langle -1\rangle})'(w)\left(\partial \psi_x(\psi_x^{\langle-1\rangle}(w))\right).
\end{eqnarray*}
\end{proof}
\begin{defn}
We say the map $\partial S_x$ in Lemma \ref{OVIM lemma} the (operator-valued) \emph{infinitesimal $S$-transform} of $x$.  
\end{defn}
Now, let us state the main theorem. 
\begin{thm}\label{OVIMfcon}
Suppose that $(\mathcal{A},\mathcal{B},E,E')$ is a $C^*$-OVI probability space. Let $x=x^*$ and $y=y^*$ be two infinitesimally freely independent random variables in $\mathcal{A}$ such that $E(x)$ and $E(y)$ are invertible. Then for $\|w\|$ small enough, we have
\begin{eqnarray*}
&& \partial S_{xy}(w) \\ 
&=& S_y(w) S_x'(S_y(w)^{-1}wS_y(w)) \left( S_y(w)^{-1}w\partial S_y(w)-S_y(w)^{-1}\partial S_y(w)S_y(w)^{-1}wS_y(w) \right) \\
 & & + S_y(w) \partial S_x(S_y(w)^{-1}wS_y(w)) + \partial S_y(w) S_x(S_y(w)^{-1}wS_y(w)). 
\end{eqnarray*}
\end{thm}
\begin{proof}
Note that by Proposition \ref{thm1}, 
$X=
\begin{bmatrix} 
x & 0 \\
0 & x 
\end{bmatrix}$ and 
$
Y=\begin{bmatrix}
y & 0 \\
0 & y
\end{bmatrix}
$ are free with respect to $\widetilde{E}$. By \eqref{Sconvolution}, we have 
\begin{equation}\label{upp S convolution}
S_{XY}\left(\begin{bmatrix}w & v \\ 0 & w \end{bmatrix}\right) = S_{Y}\left(\begin{bmatrix}w & v \\ 0 & w \end{bmatrix}\right) S_{X}\left(S_{Y}\left(\begin{bmatrix}w & v \\ 0 & w \end{bmatrix}\right)^{-1}\begin{bmatrix}w & v \\ 0 & w \end{bmatrix}S_{Y}\left(\begin{bmatrix}w & v \\ 0 & w \end{bmatrix}\right) \right) 
\end{equation}
for $\|w\|$ and $\|v\|$ small enough. 
The left hand side of \eqref{upp S convolution} is 
$$
\begin{bmatrix}
S_{xy}(w) & S_{xy}'(w)(v)+\partial S_{xy}(w) \\ 0 & S_{xy}(w)
\end{bmatrix}.
$$
To compute the right hand side of \eqref{upp S convolution}, we first compute
\begin{eqnarray*}
& & S_{Y}\left(\begin{bmatrix}w & v \\ 0 & w \end{bmatrix}\right)^{-1}\begin{bmatrix}w & v \\ 0 & w \end{bmatrix}S_{Y}\left(\begin{bmatrix}w & v \\ 0 & w \end{bmatrix}\right) \\
&=& 
\begin{bmatrix}
S_y(w)^{-1} & -S_y(w)^{-1}(S_y'(w)(v)+\partial S_y(w))S_y(w)^{-1} \\
0 & S_y(w)^{-1}
\end{bmatrix} \\
&&
\begin{bmatrix}
w & v \\
0 & w 
\end{bmatrix}
\begin{bmatrix}
S_y(w) & S_y'(w)(v)+\partial S_y(w) \\
0 & S_y(w)
\end{bmatrix} \\
&=&
\begin{bmatrix}
S_y(w)^{-1} &  ([S_y(w)]^{-1})'(v) -S_y(w)^{-1}\partial S_y(w)S_y(w)^{-1} \\
0 & S_y(w)^{-1} 
\end{bmatrix} \\
&&
\begin{bmatrix}
wS_y(w) & wS_y'(w)(v)+w\partial S_y(w)+vS_y(w) \\
0 & wS_y(w)
\end{bmatrix}, 
\end{eqnarray*}
thus the $(1,1)$-entry of 
$$
S_{Y}\left(\begin{bmatrix}w & v \\ 0 & w \end{bmatrix}\right)^{-1}\begin{bmatrix}w & v \\ 0 & w \end{bmatrix}S_{Y}\left(\begin{bmatrix}w & v \\ 0 & w \end{bmatrix}\right) 
$$
is $S_y(w)^{-1}wS_y(w)$ and the $(1,2)$-entry is the follows 
\begin{eqnarray*}
&&S_y(w)^{-1} \left( wS_y'(w)(v)+w\partial S_y(w)+vS_y(w) \right) + \\
&& \left(  ([S_y(w)]^{-1})'(v) -S_y(w)^{-1}\partial S_y(w)S_y(w)^{-1} \right) wS_y(w) \\
&=& S_y(w)^{-1}(wS_y'(w)(v) + vS_y(w)) + ([S_y(w)]^{-1})'(v) wS_y(w) \\
& & + S_y(w)^{-1}w\partial S_y(w)-S_y(w)^{-1}\partial S_y(w)S_y(w)^{-1}wS_y(w). 
\end{eqnarray*}
Therefore, the $(1,1)$-entry of
$$
S_{X}\left(S_{Y}\left(\begin{bmatrix}w & v \\ 0 & w \end{bmatrix}\right)^{-1}\begin{bmatrix}w & v \\ 0 & w \end{bmatrix}S_{Y}\left(\begin{bmatrix}w & v \\ 0 & w \end{bmatrix}\right) \right)
$$
is 
$S_x(S_y(w)^{-1}wS_y(w))$ and the $(1,2)$-entry is 
\begin{eqnarray*}
& & S_x'(S_y(w)^{-1}wS_y(w))\left( S_y(w)^{-1}(wS_y'(w)(v) + vS_y(w)) + ([S_y(w)]^{-1})'(v) wS_y(w) \right)  \\
&+& S_x'(S_y(w)^{-1}wS_y(w)) \left( S_y(w)^{-1}w\partial S_y(w)-S_y(w)^{-1}\partial S_y(w)S_y(w)^{-1}wS_y(w) \right) \\
&+& \partial S_x(S_y(w)^{-1}wS_y(w)).
\end{eqnarray*}
Hence, comparing the $(1,1)$-entry on both hand side of \eqref{upp S convolution}, we have
$$
S_{xy}(w) = S_y(w)S_x(S_y(w)^{-1}wS_y(w)).
$$
In addition, let us compare the $(1,2)$-entry on both side of \eqref{upp S convolution} with $v=0$, then  
\begin{eqnarray*}
&&\partial S_{xy}(w) \\
&=&S_y(w) S_x'(S_y(w)^{-1}wS_y(w)) \left( S_y(w)^{-1}w\partial S_y(w)-S_y(w)^{-1}\partial S_y(w)S_y(w)^{-1}wS_y(w) \right) \\
 &&+ S_y(w) \partial S_x(S_y(w)^{-1}wS_y(w)) + \partial S_y(w) S_x(S_y(w)^{-1}wS_y(w)). 
\end{eqnarray*}
\end{proof}
\begin{rem}
When $\mathcal{B}=\mathbb{C}$, we obtain the formula of scalar version of infinitesimal $S$-transform and the infinitesimal free multiplicative convolution as follows. 

For a given random variable $x\in\mathcal{A}$, the infinitesimal $S$-transform of $x$ is defined on an neighborhood of the origin by
$$ 
\partial S_x(w)=-(\psi_x^{\langle-1\rangle})'(w)\partial \psi_x\left(\psi^{\langle -1 \rangle}_x(w)\right) 
$$
where 
$$
\partial \psi_x(z):=\varphi'_x\left((1-zx)^{-1}-1\right)=\sum\limits_{n\geq 1}\varphi'(x^n)z^n \qquad \text{ for }|z|<\frac{1}{\|x\|}.
$$
Moreover, if $x$ and $y$ are infinitesimally freely independent, then 
\begin{equation}\label{smultip}
\partial S_{xy}(w)=\partial S_x(w)S_y(w)+S_x(w)\partial S_y(w)  \text{ for }\|w\| \text{ small}.    
\end{equation}
\end{rem}
Note that the $S$-transform of type $B$ is introduced in Popa, and \eqref{smultip} can also be derived from Theorem \cite[Theorem 4.2]{POP10}.  



\bibliographystyle{abbrv}
\bibliography{main.bib}
\end{document}